\def\BibTeX{{\rm B\kern-.05em{\sc i\kern-.025em b}\kern-.08em
    T\kern-.1667em\lower.7ex\hbox{E}\kern-.125emX}}
\newtheorem{definition}{Definition}
\newtheorem{theorem}{Theorem}
\newtheorem{lemma}{Lemma}
\newtheorem{corollary}{Corollary}
\newtheorem{remark}{Remark}
\newtheorem{example}{Example}
\newtheorem{assumption}{Assumption}
\begin{document}
\title{Consensus of A Class of Nonlinear Systems with Varying Topology: A Hilbert Metric Approach} 

\author{Dongjun Wu
\thanks{This work was supported in part by the European Research Council (ERC)
through the European Union’s Horizon 2020 Research and Innovation
Program under Grant 834142 (ScalableControl). }
 \thanks{The author is with Department of Automatic Control, Lund University, 22100 Lund, Sweden
 (email: dongjun.wu@control.lth.se).}
}
\maketitle
\begin{abstract}

In this technical note, we introduce a novel approach to studying consensus of
continuous-time nonlinear systems with varying topology based on Hilbert metric. We demonstrate that this
metric offers significant flexibility in analyzing consensus properties, while effectively
handling nonlinearities and time dependencies. Notably, our approach relaxes key technical
assumptions from some standard results while yielding stronger conclusions with
shorter proofs. This framework provides new insights into nonlinear consensus under
varying topology.

\end{abstract}

\begin{IEEEkeywords}
Hilbert metric, nonlinear consensus, varying topology
\end{IEEEkeywords}

\section{Introduction }

Consensus under varying topology is a fundamental research question in multi-agent
and network dynamical systems \cite{Moreau2005}.
This topic has been extensively studied over the {last three decades
within} the control community. In the seminal work \cite{vicsek1995novel},
Vicsek {\it et al.} first observed that consensus (or coordination, as referred to in their study ) is
{closely} related
to the topology of multi-agent systems. Following this observation,
lots of efforts have been made to establish the theoretical foundations
for consensus with varying topology. Notable contributions include
the works of Jadbabaie et. al. \cite{Jadbabaie2003}, Moreau \cite{Moreau2005},
and Ren and Beard \cite{Ren2005TAC}, {among others}. These early 21st century
studies sparked a surge in research on consensus with varying topology
in subsequent years. 
{The theoretical developments in this area have found applications across various
domains,}
including power grids \cite{dorfler2012synchronization}, social networks
\cite{rainer2002opinion}, and formation control \cite{oh2015survey}. More recent
advancements can be found in works such as \cite{shi2013role},
\cite{anderson2016convergence}, and \cite{barabanov2018global}.


Most of the aforementioned works have primarily focused on linear
systems. However, nonlinearities are pervasive in multi-agent systems,
such as coupled oscillators \cite{acebron2005kuramoto}, mobile
robots \cite{dimarogonas2007rendezvous} and {social networks
\cite{rainer2002opinion}}. For these systems, the tools
for analyzing linear consensus are not directly applicable, necessitating
the use of nonlinear techniques. %
Various tools have been introduced to address the problem, including
dissipative analysis \cite{stan2007analysis,yao2009passivity}, monotone
system theory \cite{yu2011consensus,altafini2012consensus}, and non-smooth
techniques \cite{Lin2007,chung2009cooperative}. For further reviews
on this topic, the readers are referred to \cite{cao2012overview,ren2008distributed,dorfler2014synchronization}
and the references therein. 

As far as we know, the result obtained in \cite{Lin2007} stands among
the most general ones regarding nonliner consensus with varying topology.
Prior to this, Lin \emph{et al.} \cite{Lin2005}\emph{ }studied consensus
of a class of nonlinear systems with fixed topology using non-smooth
analysis techniques, with invariance principle being key of the proof.
They later extended their result to nonlinear systems with switching
topology \cite{Lin2007}, requiring the switching signal to be sufficiently
regular (the definition will be provided later). When switching is
present, invariance principle can no longer be used. To handle this,
a technical result from \cite{narendra1987persistent} was employed
in \cite{Lin2007}. It is worth mentioning that, even though the system
in consideration might possibly be smooth, non-smooth techniques had to be used \cite{Lin2005,Lin2007}.
The result in \cite{Lin2007} can also be seen as a nonlinear extension
of the work \cite{Moreau2005}, as both works relied on the quasi-strongly
connectedness of multi-agent system as a key assumption.

The aim of this note is to relax some of the technical assumptions made in
\cite{Lin2007} and to provide new understandings for the problem.
{To achieve this goal, we adopt a novel approach, different from standard practice of
utilizing Lyapunov or non-smooth analysis, by}
leveraging the so-called Hilbert metric to analyze the system dynamics, {which proves to be
quite successful.}

Hilbert metric has already been used to study consensus.
For instance, in \cite{cortes2008distributed} Hilbert metric was
used to further relax and extend the results obtained in \cite{Moreau2004};
in \cite{Sepulchre2010}, it was used for consensus in non-commutative
spaces. However, to the best of our knowledge, it has not been introduced
to study nonlinear consensus with varying topology. This might be
related to the fact that the Hilbert metric was considered more suitable for linear
systems; e.g., one of the most widely used results -- Birkhoff's
Theorem \cite{birkhoff1957extensions} -- is applicable for linear
systems (or more generally, homogeneous systems). In this note, however,
we show that Hilbert metric can also serve as a systematic tool for studying
nonlinear consensus. In particular, it performs well in handling nonlinearities
and time dependencies. 

Before outlining the contributions of this note, it is necessary to
briefly recall the main result obtained in \cite{Lin2007}. Consider
the following multi-agent system with switching topology controlled
by the switching signal $\sigma:\mathbb{R}_{\ge0}\to\{1,\cdots,N\}$:
\begin{equation}
\begin{cases}
\dot{x}_{1}=f_{\sigma(t)}^{1}(x_{1},\cdots,x_{n})\\
\vdots\\
\dot{x}_{n}=f_{\sigma(t)}^{n}(x_{1},\cdots,x_{n})
\end{cases}\label{sys:lin}
\end{equation}
where $x_{i}\in\mathbb{R}^{m}$. For each $p\in\{1,\cdots,N\}$, we
associate a digraph $\mathcal{G}_{p}$ with the system defined by
vector fields $\{f_{p}^{i}\}_{i=1}^{n}$. $\mathcal{G}_{p}$ has $n$
vertices denoted as $\{1,\cdots,n\}$, and a link $(i,j)$ is in $\mathcal{G}_{p}$
if $f_{p}^{i}$ depends explicitly on $x_{j}$. The digraph is called
\emph{quasi strongly connected (QSC) }if there exists a node $k$
such that for each node $j\ne k$, there is a directed path joining
node $k$ to node $j$. Such a node is called a \emph{center}. In
other words, a digraph is QSC if there is a directed spanning tree,
and the root of the tree is a center. For a switching graph corresponding
to the system (\ref{sys:lin}), we say the graph is \emph{uniformly
quasi strongly connected (UQSC) }if there exists a constant $T>0$
such that the union of the graph on the interval $[t,t+T]$ for any
$t$ is QSC. Let $\mathcal{C}_{i}$ be the polytope formed by $x_{i}$
and its neighboring agents, and $T_{x_{i}}\mathcal{C}_{i}$ the tangent
cone of $\mathcal{C}_{i}$ at $x_{i}$ in $\mathbb{R}^{m}$, see \cite{Lin2007}
for more detailed explanations of these terminologies. 

One of the main results in \cite{Lin2007} can be restated as: 
\begin{theorem}[Lin \textit{et al}. \cite{Lin2007}]
\label{thm:lin}Consider the system \eqref{sys:lin}. Assume that
for each $p\in\{1,\cdots, {N}\}$: 1) $f_{p}^{i}$ is locally Lipschitz
and $f_{p}^{i}$ is in the relative interior of the cone $T_{x_{i}}\mathcal{C}_{i}$,
i.e., $f_{p}^{i}\in{\rm ri}(T_{x_{i}}\mathcal{C}_{i})$; 2) the switching
signal is piece-wise constant with minimum dwell time $\tau_{D}$;
then the system achieves global consensus \emph{
if and only if} the system (\ref{sys:lin}) is UQSC. 
\end{theorem}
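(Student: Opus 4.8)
The plan is to prove both implications, reserving the Hilbert metric for the sufficiency (``if'') direction, which is where the nonlinearity and time dependence create the genuine difficulty; the necessity (``only if'') direction I would settle by a direct combinatorial argument. The first move is to read the relative-interior hypothesis geometrically. Since $\mathcal{C}_i$ is the polytope spanned by $x_i$ and its current neighbours, the tangent cone $T_{x_i}\mathcal{C}_i$ is generated by the vectors $\{x_j-x_i:(i,j)\in\mathcal{G}_{\sigma(t)}\}$, so $f^i_{\sigma(t)}\in\operatorname{ri}(T_{x_i}\mathcal{C}_i)$ is equivalent to writing $\dot{x}_i=\sum_j a_{ij}(t,x)\,(x_j-x_i)$ with strictly positive coefficients $a_{ij}>0$ on the active edges. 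This recasts the system in a weighted, state- and time-dependent consensus form, absorbing the nonlinearity into the $a_{ij}$. An immediate dividend is monotonicity: testing any supporting hyperplane of the convex hull $\mathcal{C}(t)=\operatorname{conv}\{x_1(t),\dots,x_n(t)\}$ shows its support function is non-increasing, so $\mathcal{C}(t)$ never grows and every trajectory remains in the compact set $\mathcal{C}(0)$.

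The heart of the sufficiency argument is to view the flow over a window $[t,t+T]$ as an averaging operator and to invoke a Birkhoff-type contraction. Along a fixed trajectory the coefficients $a_{ij}(t,x(t))$ are merely time functions, so every scalar coordinate $\xi$ of the stacked state obeys the \emph{common} linear dynamics $\dot{\xi}=-L(t)\xi$ with a time-varying Laplacian $L(t)$; consequently $x(t+T)$ is obtained from $x(t)$ by a single row-stochastic transition matrix $\Phi(t+T,t)$, whose zero/positive pattern is dictated by the union graph over the window. Each $x_i(t+T)$ is thus a convex combination of the $x_j(t)$, and the shrinkage of the configuration is governed by the Birkhoff contraction coefficient of $\Phi(t+T,t)$ in the Hilbert metric on the positive cone. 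The UQSC hypothesis guarantees that over each length-$T$ window every agent is reached from a common centre, which makes $\Phi(t+T,t)$ scrambling and hence its coefficient strictly below one. The projective invariance of the Hilbert metric is exactly what lets the overall (possibly vanishing) size of the configuration cancel, leaving a bona fide contraction of the relative spread, and this is where the metric outperforms a Lyapunov or non-smooth $\max$-$\min$ estimate.

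The crux — and the step I expect to be hardest — is to make the per-window contraction factor uniform in $t$, i.e.\ bounded away from $1$. The danger is that the nonlinear weights $a_{ij}$ could degenerate along a trajectory, so that although the graph is combinatorially connected the effective coupling, and with it the contraction, washes out. To exclude this I would combine three ingredients: (i) there are only finitely many modes $p\in\{1,\dots,N\}$ and a minimum dwell time $\tau_D$, bounding the number and arrangement of switches per window; (ii) on each mode the strict inclusion $f^i_p\in\operatorname{ri}(T_{x_i}\mathcal{C}_i)$ is an open condition that, by continuity and compactness of $\mathcal{C}(0)$, yields a uniform lower bound on the normalized inward component; and (iii) the scale invariance of the Hilbert metric reduces the contraction factor to a continuous function on the compact space of configuration ``shapes,'' where it is strictly below one and therefore uniformly bounded away from one. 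Multiplying the contractions over consecutive windows then drives the Hilbert diameter of the configuration to zero, which is precisely global consensus.

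For the converse I would argue by contraposition and without the Hilbert metric. If the switching graph is not UQSC, then for every $T$ some window admits no centre, so the corresponding union graph has at least two sources in its condensation. Because union graphs on the $n$ labelled nodes range over a finite set, a pigeonhole/extraction argument produces a fixed nonempty set $S$ of agents that is persistently a source, i.e.\ receives no influence from its complement over a sequence of long windows; on those windows $S$ evolves autonomously, its own convex hull being invariant under the positive-combination dynamics. Initializing $S$ separated from the remaining agents then yields trajectories whose hulls cannot merge, so consensus fails from some initial condition. This establishes that UQSC is necessary and completes the equivalence.
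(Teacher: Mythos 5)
Your opening reduction --- reading $f^i_p\in{\rm ri}(T_{x_i}\mathcal{C}_i)$ as $\dot x_i=\sum_j a_{ij}(t,x)(x_j-x_i)$ with nonnegative weights, then contracting window transition matrices in the Hilbert metric --- is indeed the paper's starting point; note, however, that the paper itself only re-derives the ``if'' half of Theorem~\ref{thm:lin} (as a consequence of Theorem~\ref{thm:main}, in Example~2 following Corollary~\ref{cor:main}), the ``only if'' half being quoted from Lin \emph{et al}. Two central steps of your sufficiency argument fail. First, union-QSC over a single window does \emph{not} make $\Phi(t+T,t)$ scrambling: positivity of an entry of a transition matrix requires a temporally ordered path, not a path in the union graph. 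Take three agents with $a_{32}>0$ only on the first half of the window and $a_{21}>0$ only on the second half, all other couplings zero: the union graph is a spanning tree rooted at agent $1$, yet $\Phi_{31}=0$, and rows $1$ and $3$ of $\Phi$ (namely $e_1^{\top}$ and a vector supported on $\{2,3\}$) share no positive column. This is exactly why the paper composes several windows, using $P_{k+n}\cdots P_k\ge S^{n}$ to turn a rooted lower bound into a $\delta$-positive column; and since the UQSC center may change from window to window, you would additionally need a lemma on products of rooted stochastic matrices with positive diagonals, which you never state. Second, even granting scrambling, the Birkhoff--Hilbert contraction coefficient of a row-stochastic matrix with zero entries is in general equal to $1$ on $\mathbb{R}^n_+$: for rows $(1/2,1/2,0)$ and $(1/2,0,1/2)$ one has $\sup_{x>0}(Ax)_1/(Ax)_2=\infty$, so the projective diameter of the image is infinite. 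Scrambling contracts Hajnal's variation seminorm, not the Hilbert metric on the orthant. This is precisely the obstruction the paper isolates (Fig.~\ref{fig:two-cones}) and removes by restricting to the sub-cones $\mathcal{K}(\gamma)$ of \eqref{cone:K} and proving Lemma~\ref{lem:A-main}; your compactness-of-shapes step (iii) cannot repair it, because the per-window factor genuinely equals $1$ for some configurations --- a rooted but non-scrambling stochastic matrix leaves the spread of the configuration $(0,1,1)$ in the example above unchanged.

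On the converse (which the paper does not prove), your pigeonhole idea is the standard one but has two gaps. You must extract a fixed \emph{pair} of disjoint sets $S_1,S_2$, each receiving no outside influence on the bad windows; with a single source set $S$ frozen at a constant, the remaining agents may still converge to that constant, and no lower bound on the spread results. More seriously, the bad windows $[t_m,t_m+m]$ occur at times $t_m\to\infty$, so a trajectory started at time $0$ with $S_1$ and $S_2$ separated may have essentially merged before the first bad window ever begins; ``spread constant on arbitrarily long windows'' is perfectly compatible with asymptotic consensus. The classical fix is to initialize at the start of the $m$-th bad window ($S_1\equiv 0$, $S_2\equiv 1$, so the spread stays $\ge 1$ for a duration $m$) and to use the fact that the consensus notion in Lin \emph{et al}. is uniform in the initial time; without invoking that uniformity your contraposition does not close. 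In summary, both halves of your proposal rest on the ideas that do appear in the paper's framework, but each is missing the quantitative ingredient (multi-window composition with a uniform lower bound, and cone restriction in place of raw Birkhoff contraction; uniform-in-initial-time consensus for the converse) that makes the argument valid.
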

A switching signal satisfying the assumptions (having a minimum dwell
time $\tau_{D}$) in Theorem \ref{thm:lin} is said to be {\emph regular}.

Note that under the assumption of Theorem \ref{thm:lin}, the vector
field $f_{p}^{i}$ can be written as $f_{i}^{p}=\sum a_{ij}^{p}(x)(x_{j}-x_{i})$
for some non-negative scalar functions $a_{ij}^{p}$. 
Equivalently,
this means that the system can be written as $\dot{x}=A_{p}(x)x$
with $(A_{p}(x))_{ij}=a_{ij}^{p}(x)$, in which the matrix $A_{p}(x)$
is Metzler, that is, $A_{ij}\ge0$ for all $i\ne j$, and each
row sums to zero. System of the form $\dot{x}=A(x)x$ with $A(x)$
being Metzler has been recently considered by Kawano and Cao \cite{Kawano2022},
where such systems were referred to as ``virtually positive''. Due
to the special structure $\dot{x}=A_{\sigma(t)}(x)x$, consensus of
such systems shares a lot in common with linear time varying multi-agent
systems. A remarkable result concerning the latter was obtained by
Moreau in \cite{Moreau2004}: 
\begin{theorem}[Moreau \cite{Moreau2004}]
\label{thm:moreau} Consider the LTV system $\dot{x}=A(t)x$. Assume
that $A(\cdot)$ is uniformly bounded and piecewise continuous. Assume
that, for every time $t$, $A(t)=(a_{ij}(t))\in\mathbb{R}^{n\times n}$
is Metzler with zero row sums. If there exists an index $k\in\{1,\cdots,n\}$,
a threshold value $\delta>0$ and an interval length $T>0$ such that
for all $t\ge0$, 
\begin{equation}
\int_{t}^{T+t}a_{ik}(s)ds\ge\delta,\quad\forall i\in\{1,\cdots,n\}\backslash\{k\},\label{eq:moreau}
\end{equation}
then the system achieves exponential consensus.
\end{theorem}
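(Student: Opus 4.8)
The plan is to reduce the problem to a contraction property of the state-transition matrix read off in the Hilbert metric. Write $\Phi(t,s)$ for the transition matrix of $\dot x = A(t)x$, so that $x(t)=\Phi(t,0)x(0)$, $\tfrac{d}{dt}\Phi(t,s)=A(t)\Phi(t,s)$ and $\Phi(s,s)=I$. The first observation is that $\Phi(t,s)$ is \emph{row-stochastic} for every $t\ge s$: since each $A(t)$ is Metzler the flow leaves the nonnegative orthant invariant, so $\Phi(t,s)\ge 0$ entrywise; and since each row of $A(t)$ sums to zero, $A(t)\mathbf 1=0$, hence $\tfrac{d}{dt}\big(\Phi(t,s)\mathbf 1\big)=A(t)\Phi(t,s)\mathbf 1$ with $\Phi(s,s)\mathbf 1=\mathbf 1$, so by uniqueness $\Phi(t,s)\mathbf 1=\mathbf 1$. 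Thus $\Phi$ fixes the consensus ray $\mathbb{R}_{\ge0}\mathbf 1$ and maps the positive orthant into itself, which is exactly the structure the Hilbert metric is built to exploit.

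The key quantitative step is to convert the integral condition \eqref{eq:moreau} into a uniform positive lower bound on the $k$-th column of $P_t:=\Phi(t+T,t)$. Fix $t$, let $c(\tau)=\Phi(\tau,t)e_k$, so $c(t)=e_k$, $\dot c=A(\tau)c$ and $c(\tau)\ge 0$. Using $a_{ii}=-\sum_{j\ne i}a_{ij}$ and the uniform bound $\|A\|\le M$, one gets $\dot c_k\ge a_{kk}c_k\ge -Mc_k$, hence $c_k(\tau)\ge e^{-M(\tau-t)}$; then for $i\ne k$, discarding all nonnegative off-diagonal terms except the one multiplying $c_k$ gives $\dot c_i\ge -Mc_i+a_{ik}(\tau)c_k\ge -Mc_i+a_{ik}(\tau)e^{-M(\tau-t)}$, so that $\tfrac{d}{d\tau}\big(e^{M(\tau-t)}c_i\big)\ge a_{ik}(\tau)$. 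Integrating over $[t,t+T]$ and invoking \eqref{eq:moreau} yields $c_i(t+T)\ge e^{-MT}\delta$. Hence there is $\epsilon=\epsilon(\delta,T,M)>0$, independent of $t$, with every entry of the $k$-th column of $P_t$ at least $\epsilon$. I expect this ODE estimate to be the main obstacle, since it is where the boundedness of $A$ and the persistent-excitation condition must be combined.

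With this in hand, consensus follows from a one-step Hilbert-metric estimate. As the dynamics and the consensus notion are translation invariant and the positive orthant is forward invariant, I may assume $x(0)$ is interior and measure disagreement by the Hilbert distance to the consensus ray,
\[
 W(x):=d_H(x,\mathbf 1)=\log\Big(\tfrac{\max_i x_i}{\min_i x_i}\Big),
\]
which vanishes exactly on $\mathbb{R}_{\ge0}\mathbf 1$. Writing $m=\min_i x_i$, $M=\max_i x_i$ and using that $P_t$ is row-stochastic with $k$-th column $\ge\epsilon$, each $(P_t x)_i$ is a convex combination obeying $m+\epsilon(x_k-m)\le (P_t x)_i\le M-\epsilon(M-x_k)$; optimizing over the free value $x_k$ gives the uniform estimate
\[
 W\big(x(t+T)\big)\le \log\big((1-\epsilon)\,e^{W(x(t))}+\epsilon\big).
\]
The scalar map $h(w)=\log\big((1-\epsilon)e^{w}+\epsilon\big)$ fixes $0$ with slope $h'(0)=1-\epsilon<1$, and satisfies $h(w)\le w-c$ for a fixed $c>0$ while $w$ is large; iterating over the windows $[mT,(m+1)T]$ therefore drives $W(x(mT))$ to $0$ geometrically, i.e.\ exponential consensus.

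I would close by noting the scope of the contraction. A single positive column makes each $P_t$ a \emph{scrambling} stochastic matrix (any two rows share mass $\ge\epsilon$ in coordinate $k$), which is precisely a Dobrushin/Birkhoff bound $\tau(P_t)\le 1-\epsilon$ on the disagreement; this is enough even though the full projective diameter of $P_t$ may be infinite, so Birkhoff's theorem does not apply in its strongest form. This is exactly why Moreau's star-type connectivity from a single center $k$ already suffices for exponential consensus.
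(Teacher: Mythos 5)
Your proof is correct, but it follows a genuinely more direct route than the paper's. The paper never proves Theorem \ref{thm:moreau} in isolation: it obtains it as a corollary of the nonlinear Theorem \ref{thm:main} (see the Example following Corollary \ref{cor:main}), whose proof discretizes the flow by an Euler scheme so as to factor it as $\phi(t_{k+1},t_k,x)=P_k(x)x$ with $P_k(x)$ row-stochastic, lower-bounds $P_k(x)$ by a $\delta$-connected matrix using \eqref{eq:thm:low-bd}, and then applies the cone-contraction Lemma \ref{lem:A-main} to the family $\mathcal{K}(\gamma)$ of \eqref{cone:K}, converting Hilbert-metric decay into consensus via Lemma \ref{lem:cons-Hmetric}. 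You instead exploit linearity throughout: the transition matrix $\Phi(t+T,t)$ is row-stochastic outright (no discretization needed); your integrating-factor estimate $\frac{d}{d\tau}\bigl(e^{M(\tau-t)}c_i\bigr)\ge a_{ik}(\tau)$ replaces the paper's series lower bound $P_k\ge e^{-\lambda(t_{k+1}-t_k)}\bigl(I+\int\bar A\,{\rm d}t\bigr)$, both yielding a $k$-th column uniformly bounded below by $e^{-MT}\delta$; and your one-step scrambling (Dobrushin-type) bound $W(P_tx)\le\log\bigl((1-\epsilon)e^{W(x)}+\epsilon\bigr)$ plays exactly the role of Lemma \ref{lem:A-main}. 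What each approach buys: yours is shorter, self-contained, and in fact needs only measurable bounded $A(\cdot)$, recovering the same strengthening the paper claims over Moreau's original statement; the paper's heavier machinery exists precisely because it must survive state dependence $A(t,x)$, where no transition matrix is available and the factorization has to be manufactured by the Euler scheme.

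Two points you should make explicit to fully close the argument. First, since $h(w)=\log\bigl((1-\epsilon)e^w+\epsilon\bigr)$ satisfies $h'(w)\to 1$ as $w\to\infty$, the geometric rate is uniform only where $W$ is bounded; by convexity of $h$ and $h(0)=0$ one has $h(w)\le \frac{h(w_0)}{w_0}\,w$ on $[0,w_0]$, so the clean statement is exponential consensus on cones $\mathcal{K}\subseteq{\rm Int}\,\mathbb{R}^n_+\cup\{0\}$ or compact invariant sets, which is exactly the setting of Definition \ref{def:consensus} and the same restriction implicit in the paper. Second, passing from decay of $W=d(x,\mathds{1})$ to the pairwise Euclidean inequality of Definition \ref{def:consensus} requires the two-sided comparison between the Hilbert and Euclidean metrics; this is the paper's Lemma \ref{lem:cons-Hmetric} (built on Lemmas \ref{lem:An-Bn} and \ref{lem:2norm-Hnorm}), and your write-up invokes it implicitly in the final step.
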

The proof of Theorem \ref{thm:moreau} provided in \cite{Moreau2004}
was based on Lyapunov analysis, see also \cite{Jadbabaie2003,Moreau2005} for
discrete time versions. In particular, separable Lyapunov functions
were used. This is a technique commonly used in monotone systems,
see for example \cite{dirr2015separable,rantzer2015scalable,feyzmahdavian2017stability,Tsitsiklis1986}. 

Theorem \ref{thm:lin} and Theorem \ref{thm:moreau} and share
the same spirit. Nevertheless, their proof techniques were quite different: 
the former relies on on non-smooth analysis while the latter on Lyapunov analysis.
{It is then a question whether the two results can
be understood in a unifying framework}. This has remained an open question. In
this note, we provide an affirmative answer to it.

{} 

We address the problem by a non-Lyapunov-based method, i.e., through
the analysis of the evolution of the system dynamics under the Hilbert
metric. The contributions of the note are twofold:
\begin{enumerate}
		\item {Utilizing Hilbert metric to analyze nonlinear consensus is new.
						It does not rely on Lyapunov \cite{Moreau2004} or 
						non-smooth analysis \cite{Lin2007}. Our proof based on Hilbert
						metric for consensus is largely simplified compared to existing
						works. This has led to new understandings and insights of nonlinear
						consensus.
}

\item 
		{Instead of regular switching topology considered in \cite{Lin2007}, we
				are able to analyze more general time varying topologies, requiring 
				only measurability of the switching. This
				includes cases such as piece-wise continuous switching studied in \cite{Moreau2004}.
				Consequently, our results extend classical findings, including Theorem \ref{thm:lin} and
				\ref{thm:moreau}, by relaxing key technical assumptions. Furthermore,
				we obtain stronger results; for instance, while \cite{Lin2007} establishes only
				asymptotic consensus, our analysis demonstrates exponential consensus under
				mild additional assumptions.
		}


\end{enumerate}

Organization of the paper: In Section \ref{sec:pre}, we provide 
the problem setting and prove some technical results. In particular,
we give some explicit formulas and new definitions which will be
crucial for the next section. Section \ref{sec:mainresult} contains
the main results of this note and Section \ref{sec:simu} demonstrate
a simulation result. 

\emph{Notations: }$|\cdot|$ stands for Euclidean $2$-norm. {For a
dynamical system, use $\phi(t,t_{0},x_{0})$ to represent the solution
at $t$ from initial state $(t_{0},x_{0})$.} The interior of a set
$S$ is denoted ${\rm Int}S$. Being $X$, $Y$ some topological spaces,
denote $C(X,Y)$ the space of continuous maps from $X$ to $Y$.
Given a set $S\subseteq X$,
the indicator function $1_{S}:X\to\{0,1\}$ is defined to be $1_{S}(x)=1$
if $x\in S$ and $0$ otherwise. For a Metzler matrix $A$ with row sum zero, $\mathcal{G}^A$ 
represents the graph associated with $A$. Given $x,y$, $d(x,y)$ stands
for the Hilbert metric between $x$ and $y$. $\mathbb{N}_+$ is the set of positive natural numbers.
$\mathds{1}_{n}$ a column vector of dimension $n$ with all ones. 
{A continuous function $\beta: [0,a)\times [0,\infty)\to [0,\infty)$ is called a class
		$\mathcal{KL}$ function if 1) $r\mapsto \beta(r,s)$ is strictly increasing and
		$\beta(0,s) \equiv 0$ for all $s\ge 0$; 2) $s\mapsto \beta(r,s)$ is decreases to
		$0$ as $s\to \infty$.
}

\section{Preliminary Results} \label{sec:pre}

\subsection{Problem setting}

We consider continuous time multi-agent nonlinear systems of the form
\begin{equation}
\dot{x}_{i}=\sum_{j=1}^{n}a_{ij}(t,x)(x_{j}-x_{i}),\label{sys:cons}
\end{equation}
for $i=1,\cdots,n$, where the state of each agent is in $\mathbb{R}$
and $t\mapsto a_{ij}(t,\cdot)$ is measurable; in addition, $a_{ij}(t,x)\ge0$
for all $t\ge0$, $x\in\mathbb{R}^{n}$ and $i,j\in\{1,\cdots,n\}$,
$i\ne j$.

\begin{remark} \label{rmk:1}
We can also consider more general systems having the following form
\begin{equation}
\dot{x}_{i}=a(t)x_{i}+b(t)+\sum_{j=1}^{n}a_{ij}(t,x)(x_{j}-x_{i})\label{eq:inter-dyn}
\end{equation}
where $a(\cdot)$ and $b(\cdot)$ are bounded on $\mathbb{R}_{+}$.
Indeed, define $y(t)=e^{\int_{0}^{t}a(s){\rm d}s}x-\left(\int_{0}^{t}e^{\int_{0}^{\tau}a(s){\rm d}s}b(\tau){\rm d}\tau\right)\mathds{1}$
we have $\dot{y}=\tilde{A}(t,y)y$ where $\tilde{A}$ is Metzler and
satisfies $\tilde{A}\mathds{1}=0$. %
 %
\end{remark}
{
		We mention a few practical examples that can be written as \eqref{sys:cons}.
\begin{itemize}
	\item \textbf{Kuramoto oscillators:} This is a widely studied nonlinear consensus
			model with applications in various engineering domains. Consider a network of
			Kuramoto oscillators with time-varying and state-dependent coupling:
    \begin{equation*}
    \dot{\theta}_{i}=\omega_{i}(t)+\sum_{j}k_{ij}(t,\theta)\sin(\theta_{i}-\theta_{j}) 
    \end{equation*}
	where $\omega_{i}(t)$ are the natural frequencies, and $k_{ij}(t,\theta)$ are
	non-negative coupling functions. Phase synchronization occurs when
	$|\theta_{i}(t)-\theta_{j}(t)| \to 0$ as $t \to \infty$. This can only be achieved if
	all natural frequencies are identical. In this case, the model can be transformed into
	the standard form (\ref{sys:cons}) by defining $a_{ij}(t,\theta):= k_{ij}(t,\theta)
	\frac{\sin(\theta_i -\theta_j)}{\theta_i - \theta_j}$.
	Generalizations of the Kuramoto oscillator that can be expressed in the form of
	(\ref{sys:cons}) can be found in \cite{dorfler2014synchronization}.

    \item \textbf{Cucker-Smale model:} This model is commonly used to describe flocking behavior \cite{cucker2007emergent}:
    \begin{align*}
    \dot{x}_{i} & =v_{i}\\
    \dot{v}_{i} & =\frac{\lambda}{N}\sum_{j=1}^{N}\psi_{ij}(x,v)(v_{j}-v_{i})
    \end{align*}
    where $N$ is the number of agents, $\psi_{ij}(x,v) \ge 0$ represents the interaction
	force, and $\lambda > 0$ is a constant. The system achieves flocking if
	$v_{i}(t)-v_{j}(t) \to 0$ for all $i,j$. The second equation can clearly be written in
	the form of (\ref{sys:cons}) and thus analyzed using the proposed methods.

	\item \textbf{Hegselmann--Krause model:} This model is widely used for opinion
			dynamics \cite{rainer2002opinion}. A time-varying version can be written as
    \[
    \dot{x}_{i}=\sum_{j=1}^{N}\phi_{ij}(t,x_{i},x_{j})(x_{i}-x_{j})
    \]
    where
    \[
    \phi_{ij}(t,x_{i},x_{j})=\begin{cases}
    1, & |x_{i}-x_{j}|\le\epsilon(t)\\
    0, & \text{otherwise}
    \end{cases}
    \]
    and $\epsilon:\mathbb{R}_{+}\to[0,1]$ is a measurable function.

	\item \textbf{Animal group models:} The following is widely used to simulate animal
			group behavior (see, e.g., \cite{cristiani2011effects}):
    \[
			\hspace{-2mm} \dot{x}_{i}=\sum_{j\in A_{i}(t)}\frac{\phi_{a}(x_{i},x_{j})}{|x_{i}-x_{j}|}(x_{j}-x_{i})+\sum_{j\in R_{i}(t)}\frac{\phi_{r}(x_{i},x_{j})}{|x_{i}-x_{j}|}(x_{i}-x_{j})
    \]
	where $A_{i}$ and $\phi_{a}$ represent attraction, and $R_{i}$ and $\phi_{r}$
	represent repulsion. Both $\phi_{a}$ and $\phi_{r}$ are non-negative. This model 
	also has the form (\ref{sys:cons}).
\end{itemize}
}

The system \eqref{sys:cons} can also be written in matrix form as 
\begin{equation}
\dot{x}=A(t,x)x\label{sys:mat-form}
\end{equation}
in which $A(t,x)_{ij}=a_{ij}(t,x)$ for $i\ne j$ and $A(t,x)_{ii}= - \sum_{j\ne i}a_{ij}(t,x)$.
Note that $A(t,x)$ is Metzler and has the property $A(t,x)\mathds{1}=0$.



Associated with the system \eqref{sys:mat-form} is a varying digraph $\mathcal{G}^{A(t,x)}$
understood in the following sense: for $i\ne j$, if $a_{ij}(t,x)>0$,
then $(i,j)$ is a directed link from node $i$ to $j$ and the weight
on this link is $a_{ij}(t,x)$; if $a_{ij}(t,x)=0$, then there is
no link from $i$ to $j$. Therefore, for any Metzler matrix $A$, there is an associated graph.
Note that, we do not consider self loop, i.e., the graph $\mathcal{G}^{A}$
is characterized only by the off-diagonal elements of $A$.
{The following definition collects a few important
notions that we will use frequently in the paper.}

{
\begin{definition}
	Let $A,B, C(z)\in \mathbb{R}^{n\times n}, \;z \in Z $
	be some Metzler matrices with row sum zero, $Z$ some index set,
	\begin{enumerate}
		\item We denote $\mathcal{G}^{A}$ 
			the graph associated with the matrix $A$.
		\item We say that $\mathcal{G}^{A} \ge \mathcal{G}^B $ if $A_{ij} \ge B_{ij}$ for all $i\ne
			j$.
		\item We say that the graph valued function $z\mapsto \mathcal{G}^{C(z)}$ is continuous if $z
			\mapsto C(z)$ is continuous. 
		\item The graph $\mathcal{G}^A$ is said to be quasi-strongly connected (QSC) if $\exists k
			\in \mathbb{N}_+$, such that $A_{ik}>0$ for all $i\ne k$; it is called
			$\delta$-connected for some $\delta>0$, if $\exists k\in
			\mathbb{N}_+$, such that $A_{ik} \ge \delta$ for all $i\ne k$.
\end{enumerate}
\end{definition}

}

In this note, consensus is understood in the following sense:
\begin{definition}
\label{def:consensus}Given a forward invariant set $D\subseteq\mathbb{R}^{n}$,
the system (\ref{sys:lin}) is said to achieve
\begin{itemize}
\item \emph{asymptotic consensus} on $D$ if there exists a class $\mathcal{KL}$
function $\beta$, such that $|x_{i}(t)-x_{j}(t)|\le\beta(|x_{i}(0)-x_{j}(0)|,t)$
for all $i,j\in\{1,\cdots,n\}$, $t\ge0$ and $x(0)\in D$;

\item \emph{exponential consensus} on $D$ if there exist some constants
$k,\lambda>0$ such that $|x_{i}(t)-x_{j}(t)|\le ke^{-\lambda t}|x_{i}(0)-x_{j}(0)|$
for all $i,j\in\{1,\cdots,n\}$, $t\ge0$ and $x(0)\in D$.
\end{itemize}
\end{definition}
{} %

\begin{remark}
For any $a<b$, $[a,b]^{n}$ is invariant under the system flow \eqref{sys:cons}.
Thus we may assume $D$ in Definition \ref{def:consensus} is compact. 

\end{remark}

The following regularity condition will be imposed on $A(t,x)$ throughout
the paper unless otherwise stated.
\begin{assumption}
	{ Assume, for any compact set $D\subseteq \mathbb{R}^n$,
			\begin{enumerate}
\item the mapping $t\mapsto A(t,x)$ is measurable for every fixed $x \in D$;
\item there is a locally integrable function $k_D(t)$ such that 
\begin{equation*}
		|A(t,x)x - A(t,y)y|\le k_D(t) |x-y|
\end{equation*}for all $x, y\in D, \; t\ge 0$.
\item there exists a constant $C_D>0$, such that $|A(t,x)|\le C_D$ for every $x \in D, \; t\ge 0$.
\end{enumerate}

}
\end{assumption}
{
		Under Assumption A1, the solution to system \eqref{sys:mat-form} exists and is
		unique through every point $(t_0, x_0) \in \mathbb{R}_+ \times D$ 
		\cite[Theorem 5.1, Theorem 5.3]{hale2009ordinary}.
}

\subsection{The Hilbert metric}

In this paper, we use Hilbert metric to analyze consensus problems.
Hilbert metric is a metric defined on cones. More precisely, in our
setting, a cone is some closed subset $K\subseteq\mathbb{R}^{n}$
satisfying the following four properties 1) The interior of $K$ is non-empty; 
2) For $v,w\in K$, $v+w$ is also in $K$. 3) For all $\lambda\ge0$, and $v\in K$, $\lambda v$ is also in $K$. 
4) $K\cap-K=\{0\}$, where $-K:=\{-x:x\in K\}$.
Given a cone, we can define a partial ordering
as $x\le y$ if $y-x\in K$ and $x<y$ if $y-x\in{\rm Int}\;K$. For
$x,y\in{\rm Int}K$, define two numbers $M(x/y)=\inf\{\lambda:x\le\lambda y\}$
or $\infty$ if the set is empty, and $m(x/y)=\sup\{\mu:\mu y\le x\}$.
Then the Hilbert metric between $x$, $y$ is defined as $d(x,y)=\ln\frac{M(x,y)}{m(x,y)}$.
Define the diameter of a set $S\subseteq K$ as ${\rm diam}(S)=\sup_{x,y\in S}d(x,y).$
For any cone $S\subseteq{\rm Int}\mathbb{R}_{+}^{n}\cup\{0\}$,
we have ${\rm diam}S<+\infty$. In fact, $d(x,y)=\ln\frac{\max_{i}(x_{i}/y_{i})}{\min_{i}(x_{i}/y_{i})}$,
where $x=(x_{1},\cdots,x_{n})$, $y=(y_{1},\cdots,y_{n})$, which
is bounded on $S$. A mapping $A:K\to K$ on a cone is called non-negative.
If in addition, $A$ maps the interior of $K$ into its interior,
we call $A$ a positive mapping. For example, when $K=\mathbb{R}_{+}^{n}$,
then a non-negative matrix $A$ represents a non-negative mapping
while a positive matrix represents a positive mapping. 


In the literature, positive mapping is often studied in the positive
orthant $\mathbb{R}_{+}^{n}$. The following simple example shows
that the positive orthant {may not} be the right cone for studying consensus.
Consider the system $\dot{x}_{1}=0$, $\dot{x}_{2}=x_{1}-x_{2}$.
It is then obvious that the system achieves exponential consensus.
The vector plot of the system is shown in Fig. \ref{fig:two-cones}.
We can see that the system does not contract the positive orthant
into its interior since $x_{1}=0$ is invariant. However, the system
does contract the smaller cone painted in gray. {This turns out to be a key
observation we need.} 

\begin{figure}[th]
		\centerline{\includegraphics[scale=0.39]{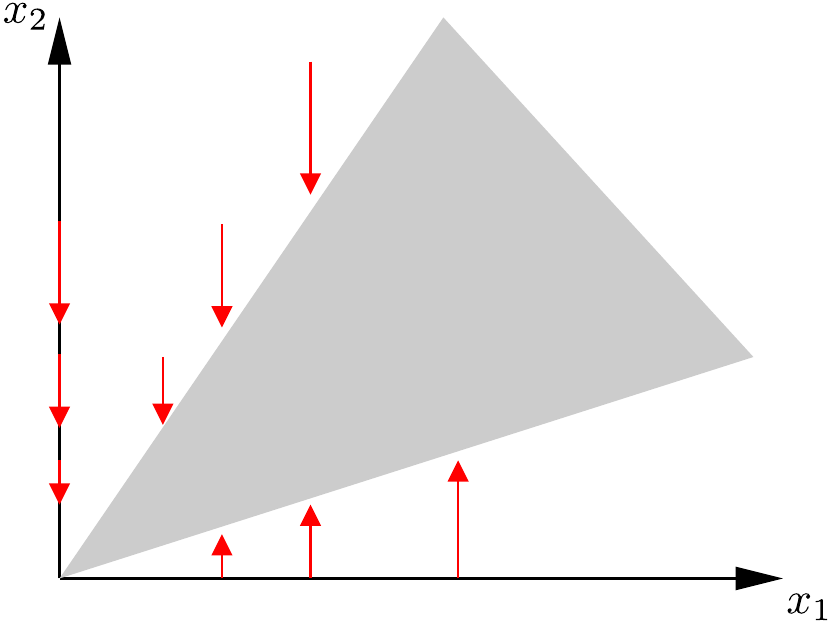}}
\caption{The red arrows represent the vector fields of a system. The $x_{2}$-axis
is invariant and hence cannot be mapped into the interior of the positive
orthant. However, the system contracts the smaller cone painted in
gray. \label{fig:two-cones}}
\end{figure}

{To proceed}, we need to justify that contraction in Hilbert metric
is equivalent to consensus that we defined earlier {in Definition
\ref{def:consensus}}. 
\begin{lemma}
\label{lem:cons-Hmetric}Suppose that $\mathcal{K}$ is a proper cone
in $\mathbb{R}_{+}^{n}$ satisfying $\mathcal{K}\subseteq\text{Int}\ \mathbb{R}_{+}^{n}\cup\{0\}$.
Then a system achieves asymptotic (resp. exponential) consensus on
$\mathcal{K}$ if and only if there exists class $\mathcal{KL}$ function
(resp. positive constants $K,\lambda$) such that 
\begin{align*}
		d(x{(t)},\mathds{1}) & \le\beta(d(x{(0)},\mathds{1}),t)\text{ (asymptotic)}\\
		d(x{(t)},\mathds{1}) & \le Ke^{-\lambda t}d(x{(0)},\mathds{1})\text{ (exponential)}
\end{align*}
where $d(x,y)$ stands for the Hilbert metric between $x$ and $y$.
\end{lemma}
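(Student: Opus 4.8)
The plan is to reduce the whole statement to the explicit description of the Hilbert metric relative to the consensus direction. Writing $M(t)=\max_i x_i(t)$ and $m(t)=\min_i x_i(t)$, the formula $d(x,y)=\ln\frac{\max_i(x_i/y_i)}{\min_i(x_i/y_i)}$ recalled above gives, on taking $y=\mathds{1}$, the clean identity $d(x(t),\mathds{1})=\ln\bigl(M(t)/m(t)\bigr)$, whereas the Euclidean disagreement is exactly $\max_{i,j}|x_i(t)-x_j(t)|=M(t)-m(t)$. Hence both notions of consensus are governed by a single pair of scalars, and the entire lemma amounts to comparing $\ln(M/m)$ with $M-m$ along trajectories.

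First I would record two ingredients. (i) Because we deal with the consensus dynamics $\dot x=A(t,x)x$ with each $A(t,x)$ Metzler and $A(t,x)\mathds{1}=0$, the map $t\mapsto M(t)$ is nonincreasing and $t\mapsto m(t)$ is nondecreasing: at a coordinate $k$ attaining the maximum, $\dot x_k=\sum_j a_{kj}(x_j-x_k)\le 0$ since $a_{kj}\ge 0$ and $x_j-x_k\le 0$ (and dually for the minimum). Thus $0<m(0)\le m(t)\le x_i(t)\le M(t)\le M(0)$, so the trajectory stays in the compact box $[m(0),M(0)]^n$, away from $\partial\mathbb{R}^n_+$. (ii) Since $\mathcal{K}\subseteq\mathrm{Int}\,\mathbb{R}^n_+\cup\{0\}$ is a closed cone, its cross-section with the unit sphere is compact and bounded away from $\partial\mathbb{R}^n_+$, so $\sup_{x\in\mathcal{K}\setminus\{0\}}\ln(\max_i x_i/\min_i x_i)=:\ln\rho<\infty$ (this is the finite-diameter fact already noted), giving in particular $M(0)/m(0)\le\rho$ uniformly over $\mathcal{K}$. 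A mean-value estimate $\ln M-\ln m=(M-m)/\xi$ with $\xi\in[m,M]$, combined with the trajectory bounds from (i), then yields the two-sided comparison
\[
\tfrac{1}{M(0)}\bigl(M(t)-m(t)\bigr)\le d(x(t),\mathds{1})\le \tfrac{1}{m(0)}\bigl(M(t)-m(t)\bigr).
\]
From here the exponential case is immediate: the conversion constants are $M(0)/m(0)$ and its reciprocal, both bounded by $\rho$ via (ii), so an exponential bound on $d(x(t),\mathds{1})$ transfers to one on $M(t)-m(t)$ with the same rate and the constant multiplied by $\rho$, and conversely.

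The hard part will be the asymptotic ($\mathcal{KL}$) equivalence, and I expect the genuine obstacle to be that the Hilbert metric is \emph{projective}, i.e.\ invariant under $x\mapsto\lambda x$, whereas $M-m$ scales linearly with the magnitude of the state; the two quantities are therefore never bi-Lipschitz on a cone, and the reconciling factor is the per-trajectory scale $\xi\in[m(0),M(0)]$, which is bounded but \emph{not uniformly small} over the unbounded cone $\mathcal{K}$. Writing $V(t)=M(t)-m(t)$ and $W(t)=d(x(t),\mathds{1})$, the comparison gives $V(t)/V(0)\le\rho\,W(t)/W(0)$, so from $W(t)\le\beta(W(0),t)$ one is led to build $\tilde\beta$ from $\rho\,s\,\beta(\cdot,t)/(\cdot)$ over the admissible range $W(0)\in(0,\ln\rho]$. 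The delicate point to check is that this yields a bona fide $\mathcal{KL}$ function: this forces control of $\beta(r,t)$ as $r\to 0^+$ (it suffices that $r\mapsto\beta(r,t)$ be dominated by a linear function near the origin, which is exactly the small-disagreement regime and which holds for the systems at hand, whose near-consensus behaviour is essentially linear). The converse direction is symmetric, exchanging $V$ and $W$ and using the lower comparison $W(0)\ge V(0)/M(0)$, with the analogous behaviour of the comparison function near infinity. I therefore expect all the real difficulty to sit in this last $\mathcal{KL}$ bookkeeping — turning the trajectory-wise comparison into one comparison function valid over all of $\mathcal{K}$ — while the exponential statement, which is what contraction-in-Hilbert-metric arguments actually deliver, follows with no such difficulty.
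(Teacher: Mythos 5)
Your proposal takes a genuinely different route from the paper's, and its exponential half is correct and complete. The paper never invokes the identity $d(x,\mathds{1})=\ln\bigl(M(t)/m(t)\bigr)$; it squares the metric and runs the chain $d^{2}\le\frac{1}{c_{1}}A_{n}\le\frac{1}{c_{1}n}B_{n}$, threading the consensus estimate through Lemma~\ref{lem:An-Bn} and the Bushell-type comparison of Lemma~\ref{lem:2norm-Hnorm}. Your log-ratio identity plus the mean-value theorem replaces both auxiliary lemmas, and your ingredient (i) --- monotonicity of $M$ and $m$ under the Metzler, zero-row-sum dynamics --- makes explicit something the paper leaves silent: Lemma~\ref{lem:2norm-Hnorm} is stated for unit vectors, and applying it unnormalized (as the paper does) discards the scale factors $|x(0)|$, $|x(t)|$, which are controlled against each other exactly because trajectories remain in $[m(0),M(0)]^{n}$. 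Some such appeal to the dynamics is unavoidable: $\dot{x}=-x$ leaves every cone invariant and achieves Euclidean exponential consensus while $d(x(t),\mathds{1})$ stays constant, so no purely metric, system-independent proof of the lemma can exist. Your exponential argument, with conversion constants bounded through $\rho$, is sound, and in this respect more careful than the paper's.

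The genuine gap is the asymptotic half, which you do not close: you reduce it to the claim that $r\mapsto\beta(r,t)$ is dominated by a linear function near $r=0$ and then assert this ``holds for the systems at hand.'' Nothing in Assumption A1 delivers that, and on an unbounded cone the statement you need is false. Concretely, for $n=2$ take $\dot{x}_{1}=a(x)(x_{2}-x_{1})$, $\dot{x}_{2}=a(x)(x_{1}-x_{2})$ with $a(x)=W^{2}/(2\sinh W)$ and $W:=|\ln(x_{1}/x_{2})|$; this satisfies Assumption A1, and along trajectories $\dot{W}=-W^{2}$, so $d(x(t),\mathds{1})=W_{0}/(1+W_{0}t)$, a single $\mathcal{KL}$ bound valid on all of $\mathcal{K}$. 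Yet $x_{1}+x_{2}=:S_{0}$ is conserved and $V(t):=|x_{1}(t)-x_{2}(t)|=S_{0}\tanh\bigl(W(t)/2\bigr)$, so fixing $V(0)$ and letting $S_{0}\to\infty$ gives $V(t)\to V(0)$ for every fixed $t$: no uniform $\mathcal{KL}$ bound $V(t)\le\tilde{\beta}(V(0),t)$ exists on the cone. Hence the projective-versus-linear mismatch you flag is a true obstruction, not bookkeeping, and the asymptotic equivalence only holds after restricting the scale --- e.g.\ to initial conditions in a compact subset of ${\rm Int}\,\mathbb{R}^{n}_{+}$, which is how the lemma is actually deployed (cf.\ the remark following it, reducing consensus to compact $D$). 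Once $m(0),M(0)$ range in a fixed interval $[\underline{m},\overline{M}]$, your two-sided comparison finishes the asymptotic case at once, e.g.\ with $\tilde{\beta}(r,t)=\overline{M}\,\beta(r/\underline{m},t)$ in one direction and symmetrically in the other. State that restriction and your proof closes; note that the paper's own one-line dismissal of the converse (``proceeds in a similar fashion'') is subject to exactly the same caveat.
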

\begin{proof}
We prove the asymptotic case -- the exponential case is similar.
Suppose that the system achieves {asymptotic} consensus on $\mathcal{K}$,
i.e., there exists a class $\mathcal{KL}$ function $\beta$ such
that $|x_{i}(t)-x_{j}(t)|\le\beta(|x_{i}(0)-x_{j}(0)|,t)$ for all
$t$ and $x(0)\in\mathcal{K}$ and $x(t)\in\mathcal{K}$ for all $t\ge0$.
{
Let
\[
A_{n}(x)=\left|x-\frac{|x|}{\sqrt{n}}\mathds{1}\right|^{2},\quad B_{n}(x)=\sum_{i,j=1}^{n}(x_{i}-x_{j})^{2};
\]
}
we estimate:
\begingroup
\allowdisplaybreaks
\begin{align*} 
d(x(t),\mathds{1})^{2} & =d\left(x(t),\frac{|x(t)|}{\sqrt{n}}\mathds{1}\right)^{2}\le\frac{1}{c_{1}}A_{n}(x(t))\text{ (Lemma \ref{lem:2norm-Hnorm}})\\
 & \le\frac{1}{c_{1}n}B_{n}(x(t))\text{ (Lemma \ref{lem:An-Bn})}\\
 & =\frac{1}{c_{1}n}\sum_{i,j=1}^{n}(x_{i}(t)-x_{j}(t))^{2}\\
 & \le\frac{1}{c_{1}n}\sum_{i,j=1}^{n}\beta(t,(x_{i}(0)-x_{j}(0))^{2})\\
 & \le\frac{n}{c_{1}}\beta(t,B_{n}(x(0)))\\
 & \le\frac{n}{c_{1}}\tilde{\beta}(t,d(x(0),\mathds{1}))\text{ (Lemma \ref{lem:An-Bn},\ref{lem:2norm-Hnorm})}
\end{align*}
\endgroup
in which $\tilde{\beta}$ is some class $\mathcal{KL}$ function.
The converse proceeds in a similar fashion.
\end{proof}
\begin{remark}
Although in Lemma \ref{lem:cons-Hmetric}, the initial condition is
restricted to a cone $\mathcal{K}$, in practice this is sufficient
for consensus on any compact set $D\subseteq\mathbb{R}^{n}$. Indeed,
let $y=x+\alpha\mathds{1}$. Then $\dot{y}=A(t,y-\alpha\mathds{1})y$
and consensus of $y$ is equivalent to consensus of $x$. By choosing
$\alpha>0$ sufficiently large, we may assume $D$ is in the interior
of $\mathbb{R}_{+}^{n}$. 
\end{remark}
We propose to study the following type of small cones. For $\gamma\in[0,\frac{1}{\sqrt{n}})$,
define a family of cones 
\begin{equation}
\mathcal{K}(\gamma):=\left\{ x\in\mathbb{R}^{n}:\frac{x_{i}}{|x|}\ge\frac{1}{\sqrt{n}}-\gamma,\;\forall i=1,\cdots n\right\}. \label{cone:K}
\end{equation}
See Fig. \ref{fig:coneke} for an illustration of such cones. The 
following lemma summarizes some properties of the diameter (in Hilbert metric)
of these cones.

\begin{figure}[!t]
		\centerline{\includegraphics[scale=0.45]{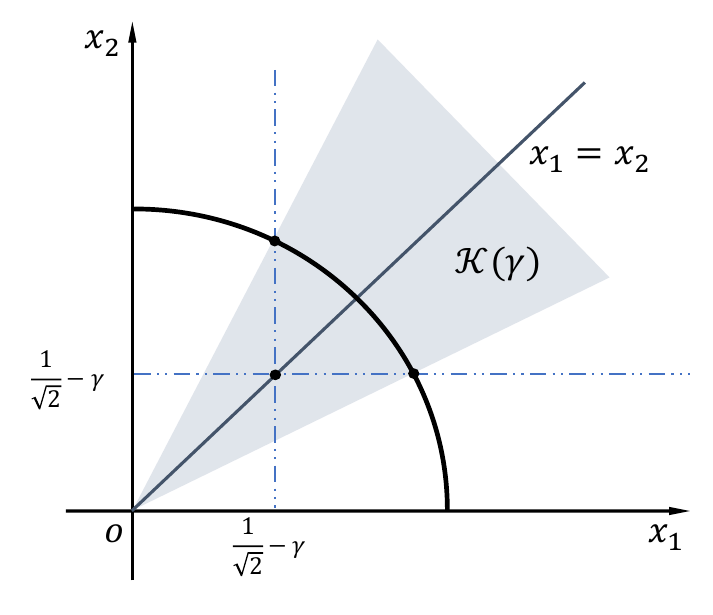}}
		\caption{Cone $\mathcal{K}(\gamma)$ when $n=2$. \label{fig:coneke}}
\end{figure}

\begin{lemma}
Given $\gamma\in[0,\frac{1}{\sqrt{n}})$, the diameter of $\mathcal{K}(\gamma)$
is 
\[
{\rm diam}\mathcal{K}(\gamma)=\log\left(1-n+\frac{n}{(1-\sqrt{n}\gamma)^{2}}\right)
\]
Denote $\alpha(\gamma)={\rm diam}\mathcal{K}(\gamma)$, then
\begin{itemize}
\item $\alpha$ is smooth, strictly increasing on $[0,1/\sqrt{n})$, 
and $\alpha(0)=0$, $\alpha(t)\to\infty$ as $t\to\frac{1}{\sqrt{n}}$.
The derivative of $\alpha$ is lower bounded away from zero.

\item For any $\epsilon_{0}\in[0,\frac{1}{\sqrt{n}})$ and $C\in(0,1)$,
there exist some positive constants $k_{1},k_{2}$ and $k\in(0,1)$,
such that $ k_{1}\gamma  \le\alpha(\gamma)\le k_{2}\gamma $,
and
\begin{equation}
\alpha(C\gamma)\le k\alpha(\gamma)\label{eq:contra-diam}
\end{equation}
for all $\gamma\in[0,\epsilon_{0}]$. %
\end{itemize}
\end{lemma}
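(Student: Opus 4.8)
The plan is to exploit the explicit form of the Hilbert metric on the positive orthant, $d(x,y)=\ln\frac{\max_i (x_i/y_i)}{\min_i(x_i/y_i)}$, which applies here since every $x\in\mathcal{K}(\gamma)\setminus\{0\}$ has strictly positive entries (because $\tfrac{1}{\sqrt n}-\gamma>0$). Writing $c:=\tfrac1{\sqrt n}-\gamma$ and using scale invariance to normalize $|x|=1$, I would first reduce the diameter to a single-vector optimization. Since $d(x,y)=\max_{i,j}\ln\frac{x_i y_j}{x_j y_i}$, the supremum over $x,y\in\mathcal K(\gamma)$ separates (the two vectors are independent) as $\mathrm{diam}\,\mathcal K(\gamma)=\max_{i,j}\big[\sup_x\ln\frac{x_i}{x_j}+\sup_y\ln\frac{y_j}{y_i}\big]$, and by the permutation symmetry of $\mathcal K(\gamma)$ each bracket equals $2\ln R$, where $R:=\max_{x\in\mathcal K(\gamma),\,|x|=1}\frac{x_1}{x_2}$. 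Thus $\mathrm{diam}\,\mathcal K(\gamma)=\ln R^2$, and the sup is attained because the cross-section is compact.

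Second, I would compute $R$ by an extreme-point argument. Maximizing $x_1/x_2$ subject to $\sum_k x_k^2=1$ and $x_k\ge c$: if some $x_k>c$ with $k\ge 3$, shrinking it to $c$ and enlarging $x_1$ raises the numerator while leaving $x_2$ fixed, so at the optimum $x_3=\cdots=x_n=c$; with those fixed, $x_1/x_2=\sqrt{1-(n-2)c^2-x_2^2}/x_2$ is strictly decreasing in $x_2$, forcing $x_2=c$ and hence $x_1=\sqrt{1-(n-1)c^2}$ (feasible since $c^2\le 1/n$ gives $x_1\ge 1/\sqrt n\ge c$). Therefore $R^2=\frac{1-(n-1)c^2}{c^2}=\frac1{c^2}-(n-1)$, and substituting $c^2=(1-\sqrt n\gamma)^2/n$ yields the stated formula $\alpha(\gamma)=\ln\big(1-n+\frac{n}{(1-\sqrt n\gamma)^2}\big)$.

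For the analytic properties I would set $u:=1-\sqrt n\gamma\in(0,1]$ and differentiate to get $\alpha'(\gamma)=\frac{2n^{3/2}/u}{\,n-(n-1)u^2\,}$, which is manifestly positive on $[0,\tfrac1{\sqrt n})$ (so $\alpha$ is smooth and strictly increasing), equals $0$ at $\gamma=0$ since the argument of the log is then $1$, and tends to $\infty$ as $\gamma\to\tfrac1{\sqrt n}$. Being continuous, positive, and blowing up at the right endpoint, $\alpha'$ attains a positive minimum over $[0,\tfrac1{\sqrt n})$, giving the lower bound away from zero. The two-sided estimate $k_1\gamma\le\alpha(\gamma)\le k_2\gamma$ on $[0,\epsilon_0]$ then follows from the mean value theorem with $k_1=\min_{[0,\epsilon_0]}\alpha'>0$ and $k_2=\max_{[0,\epsilon_0]}\alpha'$.

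The self-contraction $\alpha(C\gamma)\le k\alpha(\gamma)$ with $k<1$ is where I expect the main difficulty, because the tempting route via convexity fails: a short computation shows $\alpha''$ changes sign ($\alpha$ is in fact concave near $\gamma=0$), so one cannot simply invoke $\alpha(C\gamma)\le C\alpha(\gamma)$, nor does the crude linear bound $\alpha(C\gamma)/\alpha(\gamma)\le k_2 C/k_1$ stay below $1$ for $C$ close to $1$. Instead I would argue by compactness. Define $\rho(\gamma):=\alpha(C\gamma)/\alpha(\gamma)$ on $(0,\epsilon_0]$; since $\alpha$ is smooth with $\alpha(0)=0$ and $\alpha'(0)=2n^{3/2}\ne0$, L'Hôpital gives $\rho(\gamma)\to C$ as $\gamma\to0$, so $\rho$ extends continuously to $[0,\epsilon_0]$ with $\rho(0)=C<1$. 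Because $\alpha$ is strictly increasing and $C<1$, we also have $\rho(\gamma)<1$ for every $\gamma\in(0,\epsilon_0]$. A continuous function that is strictly below $1$ on the compact interval $[0,\epsilon_0]$ attains a maximum $k<1$, which is the desired constant.
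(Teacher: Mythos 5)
Your proof is correct, and it differs from the paper's at the one step the paper actually argues, while also filling in everything the paper omits. The paper's entire proof is a quantitative mean-value argument for \eqref{eq:contra-diam}: $\alpha(r)-\alpha(Cr)=\alpha'(\xi)(1-C)r\ge cr\ge\frac{c}{k_{2}C}\alpha(Cr)$, which yields the explicit constant $k=\bigl(1+c/(k_{2}C)\bigr)^{-1}<1$; the diameter formula, the derivative computation, and the bounds $k_{1}\gamma\le\alpha(\gamma)\le k_{2}\gamma$ are dismissed as ``straightforward.'' You supply those omitted parts in full --- the reduction of the two-point supremum to a single ratio maximization via the explicit orthant formula and permutation symmetry, the extreme-point argument identifying the maximizer $\bigl(\sqrt{1-(n-1)c^{2}},c,\ldots,c\bigr)$ with $c=\tfrac{1}{\sqrt{n}}-\gamma$, and the formula $\alpha'(\gamma)=2n^{3/2}/\bigl(u(n-(n-1)u^{2})\bigr)$ --- and they all check out. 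For \eqref{eq:contra-diam} itself you argue softly instead: $\rho(\gamma)=\alpha(C\gamma)/\alpha(\gamma)$ extends continuously to $[0,\epsilon_{0}]$ with $\rho(0)=C$ by L'H\^opital (using $\alpha'(0)=2n^{3/2}\ne0$), stays strictly below $1$ by strict monotonicity, and so attains a maximum $k<1$ by compactness. The trade-off: the paper's route produces an explicit $k$, which propagates to explicit contraction rates when the lemma is iterated in Lemma \ref{lem:A-main} and Theorem \ref{thm:main}, whereas your argument is non-constructive but leaner; moreover, your observation that $\alpha$ is concave near $0$ --- so that $\alpha(C\gamma)\ge C\alpha(\gamma)$ there and no convexity shortcut can give the result (indeed any valid $k$ must satisfy $k\ge C$) --- is a genuine insight that the paper never records and that explains why some argument is needed at all.
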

\begin{proof}
We show \eqref{eq:contra-diam}. Notice that
\begin{align*}
\alpha(r)-\alpha(Cr) & =\alpha'(\xi)(1-C)r\ge cr\ge\frac{c}{k_{2}C}\alpha(Cr)
\end{align*}
from which it follows that
\[
\alpha(Cr)\le\frac{1}{1+c/k_{2}C}\alpha(r).
\]
The rest is straightforward. 
\end{proof}
%

\subsection{Accumulated graph}

We have noted that in both \cite{Moreau2004} and \cite{Lin2007},
consensus is related to the accumulation of the graphs over time,
either the union of the switching graph in \cite{Lin2007}, or the
integration of the system matrix in \cite{Moreau2004}. This motivates
us to define the accumulated graph for a time-varying graph $\mathcal{G}(t)$,
$\forall t\ge0$. 

\begin{definition}[Accumulated graph]
Let $\mathcal{G}(\cdot)=(a_{ij}(\cdot))$ be a measurable time-varying
graph, i.e., $t\mapsto a_{ij}(t)$ is a measurable function for all
$i\ne j$. The accumulating graph of $\mathcal{G}(\cdot)$ over the interval
$[t_{1},t_{2}]$ is the graph $\mathcal{G}|_{t_{1}}^{t_{2}}$ defined
by the Lebesgue integral 
\[
\mathcal{G}|_{t_{1}}^{t_{2}}=\int_{t_{1}}^{t_{2}}\mathcal{G}(t){\rm d}t.
\]
in the sense that $(\mathcal{G}|_{t_{1}}^{t_{2}})_{ij}=\int_{t_{1}}^{t_{2}}a_{ij}(t){\rm d}t$. 
\end{definition}

\begin{example}
For a system (\ref{sys:lin}) with switching topology, the graph associated
with it can be written as $\mathcal{G}(t)=\sum_{i=1}^{N}1_{A_{i}}(t)\mathcal{G}_{i}$
where $1_{A_{i}}$ is the indicator function of some measurable sets
$A_{i}$ and $\mathcal{G}_{i}$ the graph corresponding to $p=i$.
Now the union graph on $[t,t+T]$ used in \cite{Lin2007} was nothing
but the accumulated graph $\mathcal{G}|_{t}^{t+T}$. This is quite
similar to the construction of the Lebesgue integration -- define
first for simple functions and then extend to larger class of functions. 
\end{example}
With these technical preparations, we are now ready to present 
the main results of this paper.

\section{Main results} \label{sec:mainresult}
We start with a lemma which explains how connectedness of the graph
is related to the contraction property under the Hilbert metric. The proof 
can be found in the Appendix.
\begin{lemma}
\label{lem:A-main}Let $A\in\mathbb{R}^{n\times n}$ be a non-negative
matrix and there exist a constant $\delta\in(0,1)$, and an integer
$k$ such that 
\begin{itemize}
\item $A\mathds{1}=\mathds{1}$;

\item $a_{ik}>\delta$ for all $i=1,\cdots,n$;
\end{itemize}
then for any $0<\epsilon<\frac{1}{\sqrt{n}}$, there exists a constant
$C\in(0,1)$ such that 
\begin{equation}
A\mathcal{K}(\epsilon)\subseteq\mathcal{K}(C\epsilon)\label{eq:lem:inc}
\end{equation} where $\mathcal{K}(\epsilon)$ is defined as in \eqref{cone:K}.
Moreover, $C$ can be taken as $\frac{1-\delta}{1-\sqrt{n}\epsilon\delta}$.
As a result, there exist a positive constant $c\in(0,1)$ such that
\begin{equation}
{\rm diam}(A^{m}\mathcal{K}(\epsilon))\le c^{m}{\rm diam}(\mathcal{K}(\epsilon)),\quad\forall m\ge1.\label{eq:lem:contrac}
\end{equation}
where $k$ can be taken as $c=\frac{1}{1+\eta C}$ and $\eta>0$ depends
on $\epsilon$, and $n$.
\end{lemma}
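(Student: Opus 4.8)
The plan is to prove the two displayed conclusions in sequence: first the single-step cone inclusion \eqref{eq:lem:inc}, and then to bootstrap it into the geometric diameter decay \eqref{eq:lem:contrac} by iteration together with the diameter estimate \eqref{eq:contra-diam} established earlier.

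For the inclusion, I would exploit the scale-invariance of $\mathcal{K}(\epsilon)$ and normalize $|x|=1$, so that $x\in\mathcal{K}(\epsilon)$ means exactly $x_i\ge a:=\frac{1}{\sqrt n}-\epsilon>0$ for every $i$. Writing $r:=x-a\mathds{1}\ge 0$ and using $A\mathds{1}=\mathds{1}$ gives the convenient decomposition $y:=Ax=a\mathds{1}+Ar$, so that $y_i=a+\sum_j a_{ij}r_j\ge a$ for all $i$. The whole inclusion is equivalent to the single scalar inequality $\min_i y_i\ge\bigl(\frac{1}{\sqrt n}-C\epsilon\bigr)|y|$, and a short computation shows the target threshold collapses to the clean form $\frac{1}{\sqrt n}-C\epsilon=\frac{a}{1-\sqrt n\epsilon\delta}$ for the stated $C$; in particular $C<1$ holds precisely because the hypothesis $\epsilon<\frac{1}{\sqrt n}$ forces $\sqrt n\epsilon<1$. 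It thus suffices to combine a lower bound on $\min_i y_i$ with an upper bound on $|y|$. The lower bound on $\min_i y_i$ is immediate (at worst the floor $a$, with the common column $k$ only helping, since $r\ge 0$ and $a_{ik}\ge\delta$). The delicate part is the upper bound on $|y|$: the naive estimate $|y|\le\sqrt n\max_j x_j$ is far too lossy, and one must instead use that every row places weight at least $\delta$ on the common coordinate $x_k$, which simultaneously anchors all components of $y$ and caps their spread. Checking the extremal configurations (one coordinate of $x$ at its maximum and the rest at the floor $a$, with the row weights pushed to their bounds) both delivers the bound and shows it is tight as $\epsilon\to\frac{1}{\sqrt n}$ or $\delta\to 1$, a good consistency check that the constant $C$ is sharp.

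Granting the inclusion, the diameter bound follows by iteration. Because $C(\gamma):=\frac{1-\delta}{1-\sqrt n\gamma\delta}$ is increasing in $\gamma$, the fixed value $C=C(\epsilon)$ dominates $C(C^m\epsilon)$ for every $m$, so an easy induction using \eqref{eq:lem:inc} on the successively smaller cones yields $A^m\mathcal{K}(\epsilon)\subseteq\mathcal{K}(C^m\epsilon)$. Monotonicity of ${\rm diam}$ then gives ${\rm diam}(A^m\mathcal{K}(\epsilon))\le\alpha(C^m\epsilon)$, and applying \eqref{eq:contra-diam} repeatedly (with $\epsilon_0=\epsilon$) bounds this by $c^m\alpha(\epsilon)=c^m\,{\rm diam}(\mathcal{K}(\epsilon))$, with the contraction factor $c\in(0,1)$ of the form $\frac{1}{1+\eta C}$ read off from the proof of \eqref{eq:contra-diam}. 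I expect the main obstacle to be exactly the sharp upper bound on $|Ax|$ in the inclusion step: obtaining the precise constant $C$ (rather than merely some $C<1$) requires carefully tracking how the shared column $k$ both floors the minimum entry and contracts the spread of $y$, and it is here that the extremal/tightness analysis does the real work.
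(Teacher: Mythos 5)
Your algebra is right where you carry it out --- the identity $\frac{1}{\sqrt n}-C\epsilon=\frac{a}{1-\sqrt n\epsilon\delta}$ is correct, and your iteration paragraph (monotonicity of $\gamma\mapsto\frac{1-\delta}{1-\sqrt n\gamma\delta}$, induction to $A^m\mathcal K(\epsilon)\subseteq\mathcal K(C^m\epsilon)$, then the diameter lemma) is exactly the paper's second step. The fatal problem is the step you defer, the upper bound on $|Ax|$: under your normalization ($|x|=1$ together with only the floor $x_i\ge a=\frac1{\sqrt n}-\epsilon$) no such bound exists, and the extremal configuration you propose to check is a counterexample rather than a confirmation. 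Take $n=3$, $k=1$, $\delta=\tfrac15$, and
\[
A=\begin{pmatrix}1&0&0\\ \tfrac14&0&\tfrac34\\ \tfrac14&0&\tfrac34\end{pmatrix},
\qquad
x=\Bigl(\tfrac1{10},\,\tfrac1{10},\,\sqrt{0.98}\Bigr)^{\top},
\]
so that $A\mathds{1}=\mathds{1}$, $a_{i1}>\delta$ for all $i$, $|x|=1$, and $x\in\mathcal K(\epsilon)$ with $\epsilon=\tfrac1{\sqrt3}-\tfrac1{10}$. Then $Ax\approx(0.1,\,0.767,\,0.767)^{\top}$, $|Ax|\approx1.090$, hence $\min_i (Ax)_i/|Ax|\approx0.092<0.1=\tfrac1{\sqrt3}-\epsilon$. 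So $Ax\notin\mathcal K(\epsilon)$, and therefore $Ax\notin\mathcal K(C\epsilon)$ for \emph{any} $C\in(0,1)$: with $\mathcal K(\epsilon)$ read literally as in \eqref{cone:K}, the one-step inclusion \eqref{eq:lem:inc} is simply false, and no amount of care in bounding $|y|$ can rescue your scheme. The tension you sensed ("the delicate part") is real, but it resolves against you, not for you.

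The reason the paper's proof does not run into this is that it never normalizes by $|x|=1$. It represents cone elements as positive multiples of vectors $e$ in the band $\mathcal E(\epsilon)=\{e:\frac1{\sqrt n}-\epsilon\le e_i\le\frac1{\sqrt n},\,|e|\le1\}$, so the representative has \emph{two-sided} control: its spread satisfies $\max_j e_j-\min_j e_j\le\epsilon$. Writing $e=\frac1{\sqrt n}\mathds{1}-\tilde e$ with $0\le\tilde e_j\le\epsilon$, this is what powers the key estimate $(A\tilde e)_i\le\epsilon(1-a_{ik})+a_{ik}\tilde e_k\le\delta(\tilde e_k-\epsilon)+\epsilon$ and yields the stated constant $C$; your floor-only normalization destroys it, because a unit vector in $\mathcal K(\epsilon)$ can have spread close to $1$ (as in the counterexample, where $x_3\approx0.99$ while $x_1=x_2=0.1$). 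Note that the set of positive multiples of $\mathcal E(\epsilon)$ is the ratio cone $\{x\ge0:\min_i x_i\ge(1-\sqrt n\epsilon)\max_i x_i\}$, which is \emph{strictly smaller} than $\mathcal K(\epsilon)$ --- the $x$ above lies in $\mathcal K(\epsilon)$ but not in the ratio cone --- so the paper's own identification of the two (the line "$\mathcal{K}(\epsilon)=\{\alpha e:\alpha>0,\,e\in\mathcal E(\epsilon)\}$") is inexact, and what its computation really establishes is the contraction statement for the band/ratio cones. That is the version of the lemma that is true and that suffices downstream; your route, which takes \eqref{cone:K} at face value, is attacking a false statement, and any honest completion of your extremal analysis would have discovered this rather than delivered the bound.
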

    



Our first main result is the following theorem. 
\begin{theorem}
	\label{thm:main}{Let $D$ be a compact set in $\mathbb{R}^n$.} 
	Consider the multi-agent system (\ref{sys:mat-form}) satisfying Assumption \textbf{A1}. 
Let $\{t_k\}_{1}^\infty$ be an increasing sequence with $t_k \xrightarrow{
k\to \infty}\infty$ and $\sup_k |t_{k+1}-t_k|<\infty$.
If there exists a graph $\mathcal{G}^{B(x)}$ such that $B(x)$ is quasi-strongly connected and
{
\begin{equation} \label{eq:thm:low-bd}
\int_{t_k}^{t_{k+1}} \mathcal{G}^{A(t,\phi(t,t_k,x))} {\rm d} t
\ge \mathcal{G}^{B(x)}
\end{equation}
}for all $x\in D$, $k\ge 1$, then the system achieves
{\em asymptotic} consensus on $D$. If in addition, the graph 
{$\mathcal{G}^{B(x)}$} is continuous, the system achieves {\em exponential}
consensus on $D$.

\end{theorem}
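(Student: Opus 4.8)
The plan is to turn consensus into a contraction statement in the Hilbert metric and to realize the nonlinear flow over each window $[t_k,t_{k+1}]$ as multiplication by a row-stochastic matrix. By the remark following Lemma \ref{lem:cons-Hmetric} I may shift $D$ into ${\rm Int}\,\mathbb{R}^n_+$, and by Lemma \ref{lem:cons-Hmetric} it then suffices to produce a class-$\mathcal{KL}$ (resp. exponential) decay estimate for $d(x(t),\mathds{1})$ using the nested cones $\mathcal{K}(\gamma)$ of \eqref{cone:K}. Writing $z_k:=\phi(t_k,t_1,x_0)$, the crucial observation is that on $[t_k,t_{k+1}]$ the trajectory starting at $z_k$ also solves the linear time-varying equation $\dot y=\tilde A_k(t)y$ with $\tilde A_k(t):=A(t,\phi(t,t_k,z_k))$; by uniqueness under \textbf{A1} this gives $z_{k+1}=\Phi_k z_k$, where $\Phi_k$ is the transition matrix of $\tilde A_k$ over $[t_k,t_{k+1}]$. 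Since $\tilde A_k(t)$ is Metzler with $\tilde A_k(t)\mathds{1}=0$, the matrix $\Phi_k$ is entrywise non-negative and satisfies $\Phi_k\mathds{1}=\mathds{1}$, i.e. it is row-stochastic.

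The heart of the argument is a technical lemma turning the integrated connectivity \eqref{eq:thm:low-bd} into a uniformly positive column of $\Phi_k$. Let $c$ be a center of $\mathcal{G}^{B(z_k)}$, so that $\int_{t_k}^{t_{k+1}}(\tilde A_k)_{ic}\,dt\ge B_{ic}(z_k)\ge\delta$ for all $i\neq c$. Tracking the $c$-th column $\psi_i(t):=(\Phi_k(t,t_k))_{ic}$, which obeys $\dot\psi_i=\sum_{j\ne i}(\tilde A_k)_{ij}(\psi_j-\psi_i)$ with $\psi_i(t_k)=\delta_{ic}$, a Gronwall bound using $|\tilde A_k|\le C_D$ keeps the center component bounded below, $\psi_c\ge\rho>0$; feeding this into an integrating-factor estimate for $\psi_i$, $i\ne c$, and invoking the integral bound yields $\psi_i(t_{k+1})\ge\rho\,\delta\,e^{-C_D'(t_{k+1}-t_k)}=:\delta'>0$ for every $i$. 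Here the uniform gap $\sup_k|t_{k+1}-t_k|<\infty$ is what makes $\delta'$ bounded away from $0$ across windows. Thus $\Phi_k$ meets the hypotheses of Lemma \ref{lem:A-main}, and if $z_k\in\mathcal{K}(\gamma_k)$ then $z_{k+1}=\Phi_k z_k\in\Phi_k\mathcal{K}(\gamma_k)\subseteq\mathcal{K}(\gamma_{k+1})$ with $\gamma_{k+1}=C_k\gamma_k$ and $C_k=\tfrac{1-\delta'}{1-\sqrt n\,\gamma_k\delta'}<1$.

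For the exponential claim I would use continuity of $x\mapsto B(x)$ together with compactness of $D$: the function $g(x):=\max_c\min_{i\ne c}B_{ic}(x)$ is then continuous and strictly positive, hence $\inf_{x\in D}g(x)=\delta>0$, giving a single $\delta'$ and a single $C=\tfrac{1-\delta'}{1-\sqrt n\,\epsilon\delta'}<1$ valid along the whole trajectory (using $\gamma_k\le\epsilon$). Then $\gamma_k\le C^{\,k-1}\gamma_1$, and the diameter estimate (giving $\alpha(C\gamma)\le\kappa\,\alpha(\gamma)$ with $\kappa<1$) yields $d(z_k,\mathds{1})\le\alpha(\gamma_k)\le\kappa^{\,k-1}\alpha(\gamma_1)$. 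Because each $\tilde A_k(t)$ is Metzler with zero row sums, the partial flows $\Phi_k(t,t_k)$ are row-stochastic and hence Hilbert-non-expansive, so $d(x(t),\mathds{1})\le d(z_k,\mathds{1})$ for $t\in[t_k,t_{k+1}]$; combined with $\sup_k|t_{k+1}-t_k|<\infty$ this converts geometric decay in $k$ into genuine exponential decay in $t$, and Lemma \ref{lem:cons-Hmetric} closes the exponential case.

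The main obstacle is the asymptotic case without continuity of $B$. There each step still contracts, so $\gamma_k$ decreases strictly to some $\gamma_\infty\ge0$ and the $\Phi_k$ remain non-expansive (uniform stability), but the per-window factors $C_k$ may tend to $1$ because $\delta'$ can degrade as $z_k$ ranges over $D$. The plan is to rule out $\gamma_\infty>0$ by a compactness argument: the states $z_k$ remain in the compact set $\mathcal{K}(\gamma_1)\cap D$, and since $\mathcal{G}^{B(x)}$ is QSC at every point of $D$, one argues that the one-window Hilbert contraction is bounded away from $1$ on the compact ``annulus'' $\{\gamma_\infty\le\gamma\le\gamma_1\}$, contradicting convergence to a positive $\gamma_\infty$. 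Uniform stability together with this uniform attractivity on $D$ then assemble into a single class-$\mathcal{KL}$ bound for $d(x(t),\mathds{1})$, which via Lemma \ref{lem:cons-Hmetric} gives asymptotic consensus on $D$.
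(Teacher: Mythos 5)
Your proof of the \emph{exponential} half is correct, and it takes a genuinely different route from the paper's. Where the paper realizes the one-window map $\phi(t_{k+1},t_k,x)=P_k(x)x$ as a limit of Euler products \eqref{eq:x_N} and reads the lower bound $P_k(x)\ge e^{-\lambda(t_{k+1}-t_k)}\bigl(I+\int_{t_k}^{t_{k+1}}\bar A\,{\rm d}t\bigr)$ off that expansion, you obtain the same factorization by the freezing trick: the nonlinear trajectory solves the linear Carath\'eodory system $\dot y=A(t,\phi(t,t_k,z_k))\,y$, so $P_k$ is its transition matrix $\Phi_k$, automatically non-negative and row-stochastic, and the uniformly positive column $(\Phi_k)_{ic}\ge\delta'$ follows from your integrating-factor/Gronwall estimate combined with \eqref{eq:thm:low-bd} and $\sup_k|t_{k+1}-t_k|<\infty$. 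This buys two things the paper must work for separately: merely measurable $t\mapsto A(t,x)$ is covered at once (the paper needs a second pass with simple-function approximation in $L^\infty$), and the inter-sample times are controlled explicitly by Hilbert non-expansiveness of row-stochastic matrices (left implicit in the paper). From the uniform $\delta'$ onward the two arguments coincide: Lemma \ref{lem:A-main}, the diameter estimate \eqref{eq:contra-diam}, and Lemma \ref{lem:cons-Hmetric}.

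The \emph{asymptotic} half, however, has a genuine gap. Your plan is to rule out $\gamma_\infty>0$ because ``the one-window Hilbert contraction is bounded away from $1$ on the compact annulus $\{\gamma_\infty\le\gamma\le\gamma_1\}$.'' That uniform bound is exactly what discontinuity of $B$ destroys. The contraction factor of window $k$ is $C_k=\frac{1-\delta'_k}{1-\sqrt{n}\,\gamma_k\delta'_k}$ with $\delta'_k$ proportional to $\min_{i\ne c_k}B_{ic_k}(z_k)$; it is not a function of the cone parameter $\gamma$ alone but of the map $\Phi_k$, and there are infinitely many windows with no compactness in $k$. Pointwise QSC of $B$ on $D$ gives $\delta'_k>0$ for every $k$ while still allowing $\inf_k\delta'_k=0$ --- precisely the degradation you concede two sentences earlier, and which compactness of a set of scalar parameters cannot repair. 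As written, your argument establishes only that $\gamma_k$ is strictly decreasing, which does not force $\gamma_\infty=0$.

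The paper closes this case without any uniform rate. It sets $D_{k+1}=\phi(t_{k+1},t_k,D_k)$, lets the diameters decrease to a limit $c$, supposes $c>0$, and derives a contradiction \emph{inside a single, sufficiently late window} $m$: for that fixed $m$ the map $w\mapsto\phi(t_{m+1},t_m,w)$ is continuous (this comes from Assumption A1, independently of any regularity of $B$), so by compactness it sends $\mathcal{K}(c)\cap D_m$ strictly inside $\mathcal{K}(c)$ with a positive margin $c-c'$ for that window; meanwhile a point $y\in D_m$ whose image remains outside $\mathcal{K}(c)$ can be approximated by $z\in\mathcal{K}(c)$, and continuity forces the image of $y$ near the strict interior of $\mathcal{K}(c)$ --- a contradiction. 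The essential difference is that the paper trades your (unavailable) uniformity over windows for continuity of the flow within one window plus compactness of $D$. To repair your write-up, replace the uniform-annulus claim by this per-window contradiction; your formulation supports it directly, since the continuity needed is that of the nonlinear flow $w\mapsto\phi(t_{m+1},t_m,w)$, not of $B$ or of $k\mapsto\Phi_k$.
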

\begin{proof} 
We prove the second part first. 
As remarked earlier, by defining the coordinate transform
$y=x+\alpha\mathds{1}$ for $\alpha>0$ large, we may assume that
$D$ lies in a sufficiently small cone in $\mathbb{R}_{+}^{n}$. First,
assume $t\mapsto A(t,x)$ is piece-wise continuous. This assumption
will be removed later. Consider the Euler approximation scheme
on the interval $[t_{k},t_{k+1}]$:
\[
x_{i+1}=x_{i}+hA(t_{k}+ih,x_{i})x_{i},\quad i=0,\cdots,N-1
\]
with $h=\frac{t_{k+1}-t_{k}}{N}$, $N \in \mathbb{Z}_+$ large and
$x_{0}=x$. %
Choose $\lambda$ large enough such that $\bar{A}(t,x):=A(t,x)+\lambda I\ge0$
for all $t\ge0$ and $x\in D$. The following calculation is in order{\small{}{}
\begin{align}
x_{0} & =x\nonumber \\
x_{1} & =(1-h\lambda)x+h\bar{A}(t_{k},x)x\nonumber \\
x_{2} & =[(1-h\lambda)^{2}I+h(1-h\lambda)(\bar{A}(t_{k},x)+\bar{A}(t_{k}+h,x_{1}))+*]x\nonumber \\
 & \vdots\nonumber \\
x_{N} & =[(1-h\lambda)^{N}I+(1-h\lambda)^{N-1}\sum_{i=0}^{N-1}h\bar{A}(t_{k}+ih,x_{i})+*]x\label{eq:x_N}
\end{align}
}where $*$ stands for non-negative terms. %
Now let $P_{k}^{N}(x)$ be the matrix in the bracket on the right
hand side of (\ref{eq:x_N}) and define $P_{k}(x)=\lim_{{N}\to\infty}P_{k}^{N}(x)$.
Note that \begin{equation}
\phi(t_{k+1},t_{k},x)=P_{k}(x)x.\label{eq:Pk(x)}
\end{equation}
We claim that $P_{k}(x)$ has the following properties: 1) $P_{k}(x)$
is non-negative and $P_{k}(x)\mathds{1}=\mathds{1}$; 2) There exists
a non-negative matrix $S$ defining a QSC graph \footnote{Here we are slightly abusing the
concept of a QSC graph. It was previously defined as the graph associated with a continuous-time system
\ref{sys:mat-form}. Here, it
should be understood in the sense that there exists some $k\in \mathbb{N}_+$, such that $S_{ik}>0$ for all
$i\ne k$.
}, independent of $k,x$
such that $P_{k}(x)\ge S$. Item 1) is obvious, we verify 2). From
\eqref{eq:x_N} we see 
\[
P_{k}(x)\ge e^{-\lambda(t_{k+1}-t_{k})}\left(I+\int_{t_{k}}^{t_{k+1}}\bar{A}(t,\phi(t,t_{k},x)){\rm d}t\right)\ge S
\]for some non-negative $S$ such that {$\mathcal{G}^S$} is QSC
thanks to \eqref{eq:thm:low-bd},
the continuity of $\mathcal{G}^{B(x)}$ and compactness of $D$. Without loss of generality, we may assume
that $\mathcal{G}^S$ is $\delta$-connected (since
otherwise, we can consider the matrix $P_{k+n}P_{k+n-1}\cdots P_{k}$,
which will be lower-bounded by $S^{n}$ that is $\delta$-connected).
Invoking Lemma \ref{lem:A-main}, we conclude that the system achieves
exponential consensus on a cone of the form \eqref{cone:K} which includes
$D$ in its interior.

To remove the assumption on piece-wise continuity of $t\mapsto A(t,x)$
on $[t_{k},t_{k+1}]$, it suffices to replace Riemann integration
by Lebesgue integration as follows. View $\varphi:t\mapsto A(t,\cdot)$
as a mapping from $[t_{k},t_{k+1}]$ to $C(D;\mathbb{R}^{n\times n})$
equipped with norm $||g||_{*}=\sup_{x\in D}||g(x)||$. 
Then $\varphi\in L^{\infty}([t_{k},t_{k+1}];C(D;\mathbb{R}^{n\times n}))$,
which can be approximated by simple functions. Let $\eta>0$ be an
arbitrarily small constant, and $\mathscr{A}(t,x)=\sum_{i=0}^{N-1}1_{[s_{i},s_{i+1})}(t)A_{i}(x)$
the simple function such that $\|\varphi-\mathscr{A}\|_{\infty}<\eta$.
Assume that the partition is uniform (otherwise we can always refine the 
partition to make it close to uniform and then use approximation arguments),
i.e., $s_{i+1}-s_{i}=\frac{t_{k+1}-t_{k}}{N}$
for all $i$. Let $\bar{A}_{i}(x)=A_{i}(x)+\lambda I\ge0$, $\bar{\mathscr{A}}(t,x)=\mathscr{A}(t,x)+\lambda I$.
As before, the Euler approximation scheme gives \eqref{eq:x_N}.
Now
\begin{align*}
\sum_{i=0}^{N-1}h\bar{A}(t_{k} & +ih,x_{i})=\sum_{i=0}^{N-1}h\bar{A}_{i}(x_{i})\\
 & =\int_{t_{k}}^{t_{k+1}}\bar{\mathscr{A}}(t,\phi(t,t_{k},x)){\rm d}t\\
 & \ge\int_{t_{k}}^{t_{k+1}}\bar{A}(t,\phi(t,t_{k},x)){\rm d}t-\eta(t_{k+1}-t_{k})\mathds{1}_{n\times n}.
\end{align*}
Since $|t_{k+1}-t_{k}|$ is uniformly bounded, we can choose $\eta$
sufficiently small such that $P_{k}^{N}(x)$ is lower-bounded by a
QSC graph which is independent of $k$. Thus the system achieves exponential
consensus on $D$.

It remains to prove asymptotic consensus when $\mathcal{G}^{B(x)}$ 
is not necessarily continuous. As before, we can assume
that $t\mapsto A(t,x)$ is piecewise continuous. The Euler approximation
still converges due to local Lipschitz continuity of the 
system vector fields. Now, instead of
having a uniform lower bound on $P_{k}(x)$ (see \eqref{eq:Pk(x)}),
we only know that it is bounded by some QSC graph. 
But still, we know that for small $\epsilon_{0}>0$,
$P_{k}(x)\mathcal{K}(\epsilon_{0})\subseteq\mathcal{K}(\epsilon_{1})$.
Since $D$ is compact, we may assume that $D\subseteq\mathcal{K}(\epsilon_{0})$.
Define $D_{1}=\phi(t_{1},t_{0},D)$, $D_{k+1}=\phi(t_{k+1},t_{k},D_{k})$
for $k\ge1$ and $\eta_{k}={\rm diam}D_{k}$ which is a 
non-negative decreasing sequence. Set a decreasing sequence
$\{\epsilon_{k}\}_{1}^{\infty}$, such that $D_{k}\subseteq\mathcal{K}(\epsilon_{k})$.
Clearly, $\eta_{k}$ is strictly decreasing and is bounded from
below by $0$. Therefore $\eta_{k}$ converges to a limit $c\ge0$.

\begin{figure}[!t]
		\centerline{\includegraphics[scale=0.5]{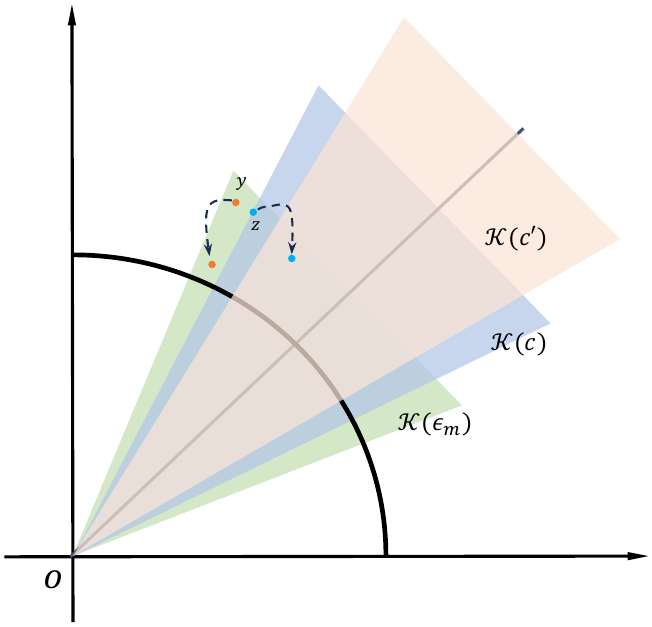}}
\caption{Asymptotic contraction\label{fig:asymp}.}
\end{figure}

Suppose $c>0$, and choose $\eta_{m}$ sufficiently close to $c$.
There must exist $y\in\mathcal{K}(\epsilon_{m})\backslash\mathcal{K}(c)$,
such that $\phi(t_{m+1},t_{m},y)\in\mathcal{K}(\epsilon_{m})\backslash\mathcal{K}(c)$.
Choose $z\in\mathcal{K}(c)$ close to $y$, then due to the continuity
of $y\mapsto\frac{\phi(t,t_{0},y)}{|\phi(t,t_{0},y)|}$, the error
between $\frac{\phi(t_{m+1},t_{m},y)}{|\phi(t_{m+1},t_{m},y)|}$ and
$\frac{\phi(t_{m+1},t_{m},z)}{|\phi(t_{m+1},t_{m},z)|}$ is of order
$O(|\epsilon_{m}-c|)$ which can be made sufficiently small by choosing
$m$ large enough. But {\small{}
\[
\min_{w\in\mathcal{K}(c)\cap D}\frac{\phi(t_{m+1},t_{m},w))_{i}}{|\phi(t_{m+1},t_{m},w)|}=\frac{1}{\sqrt{n}}-c'>\frac{1}{\sqrt{n}}-c
\]
(since $\eta_{m+1}<\eta_{m}$) }implying that $\frac{\phi(t_{m+1},t_{m},z)_{i}}{|\phi(t_{m+1},t_{m},z)|}$
is away from $\frac{1}{\sqrt{n}}-c$ for any $m$. This is a contradiction
since $\frac{\phi(t_{m+1},t_{m},y)}{|\phi(t_{m+1},t_{m},y)|}\in\mathcal{K}(\epsilon_{m})\backslash\mathcal{K}(c)$
and that $\frac{\phi(t_{m+1},t_{m},y)}{|\phi(t_{m+1},t_{m},y)|}$
and $\frac{\phi(t_{m+1},t_{m},z)}{|\phi(t_{m+1},t_{m},z)|}$ are sufficiently
close. Thus we conclude that $c=0$ and asymptotic consensus is achieved.
The proof strategy is shown in Fig. \ref{fig:asymp}.

The proof is now complete.
%
{} \end{proof}

Theorem \ref{thm:main} needs the computation of the integral $\int_{t_{k}}^{t_{k+1}}A(t,\phi(t,t_{k},x)){\rm d}t$,
which is impossible in most cases -- except that $A(t,x)$ does not
dependent on $x$. The following corollary is more convenient for
practical use. 
\begin{corollary} \label{cor:main}
Consider the system \eqref{sys:cons} under Assumption \textbf{A1}. 
Let $D$ be a compact invariant set.
Suppose that there exists a
(continuous) QSC graph {$\mathcal{G}^{B(x)}$,} continuous on
$D$, and an increasing
sequence $\{t_{k}\}_{1}^{\infty}$ with $t_{k}\xrightarrow{k\to\infty}\infty$
and $\sup|t_{k+1}-t_{k}|<\infty$, such that 
{
\[
	\int_{t_{k}}^{t_{k+1}}\mathcal{G}^{A(t,x)}{\rm d}t\ge \mathcal{G}^{B(x)},\quad\forall x\in D,\;k\ge1
\]
} then the system achieves (asymptotic) exponential consensus on $D$.
\end{corollary}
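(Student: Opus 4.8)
The plan is to derive this corollary directly from Theorem~\ref{thm:main}. The hypothesis of the theorem concerns the accumulated graph \emph{along the trajectory}, $\int_{t_k}^{t_{k+1}} \mathcal{G}^{A(t,\phi(t,t_k,x))}\,{\rm d}t$, whereas the corollary only supplies the \emph{pointwise} accumulated graph $\int_{t_k}^{t_{k+1}} \mathcal{G}^{A(t,x)}\,{\rm d}t$ evaluated at the frozen base point $x$. So the task reduces to showing that the pointwise bound forces a (possibly smaller, but still quasi-strongly connected) lower bound on the trajectory accumulated graph, after which the theorem applies verbatim. First I would record that, since $D$ is compact and $x\mapsto B(x)$ is continuous and QSC-valued, its positive entries admit a uniform floor: there is a node $k(x)$ with $B_{ik(x)}(x)\ge \delta_0$ for every $i\ne k(x)$ and some fixed $\delta_0>0$, with only finitely many centers $k(x)$ occurring by compactness. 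This $\delta_0$ is precisely the margin that will absorb the error incurred by replacing the frozen base point with the moving trajectory.

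The core quantity I would control, entrywise, is the discrepancy
\begin{equation*}
\left| \int_{t_k}^{t_{k+1}} \bigl(a_{ij}(t,\phi(t,t_k,x)) - a_{ij}(t,x)\bigr)\,{\rm d}t \right|.
\end{equation*}
I would bound this using the uniform continuity of $x\mapsto A(t,\cdot)$ on the compact set $D$ (uniform in $t$), together with an estimate on how far the trajectory departs from its starting point. Here the Metzler, zero-row-sum structure is decisive: since $A(t,y)\mathds{1}=0$ the vector field vanishes on the consensus line, so $|\dot\phi|$ is controlled by the current spread $\max_i\phi_i-\min_i\phi_i$, and this spread is non-increasing along the flow (each coordinate satisfies $\dot x_i=\sum_j a_{ij}(x_j-x_i)$, which is nonnegative at a minimal coordinate and nonpositive at a maximal one). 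Consequently the displacement $|\phi(t,t_k,x)-x|$ over an interval of bounded length is proportional to the spread, and by uniform continuity the trajectory integrand stays within a controllable margin of the frozen one.

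The decisive step is to force this margin below $\delta_0$, so that the positive pattern of $B(x)$ survives along the trajectory and the trajectory accumulated graph still dominates a QSC graph with the same center $k(x)$; invoking Theorem~\ref{thm:main} then yields exponential consensus on $D$ (and asymptotic consensus if the continuity of the limiting lower bound is relinquished). I expect this to be the main obstacle, because over a long interval $[t_k,t_{k+1}]$ the trajectory may in principle drift an order-one distance from $x$, so a naive one-shot comparison against the single base point need not be small. I would resolve it by subdividing each interval into short subintervals, on which the displacement is genuinely small by the spread-proportional bound above, and by carrying the comparison relative to the spread rather than in absolute terms: since both the useful coupling and the displacement error scale with the spread, their ratio is controlled uniformly, independently of how large the initial spread on $D$ is. Combining this scale-invariant comparison with the uniform floor $\delta_0$ and the non-increasing spread recovers a continuous QSC lower bound for the full trajectory integral, which is exactly the hypothesis of Theorem~\ref{thm:main}, completing the proof.
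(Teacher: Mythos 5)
Your overall strategy --- reducing to Theorem \ref{thm:main} by converting the frozen-point hypothesis into a lower bound along trajectories --- is the same as the paper's, and two of your ingredients are sound: the uniform floor $\delta_0>0$ on the positive entries of $B(x)$ (from continuity, QSC-ness, and compactness of $D$), and the facts that the spread $\max_i x_i-\min_i x_i$ is non-increasing while the velocity is bounded by a constant times the spread. The gap is in your decisive step, the ``scale-invariant comparison.'' You assert that ``both the useful coupling and the displacement error scale with the spread,'' but only half of this is true. The displacement error does scale with the spread; the useful coupling does not: the hypothesis only gives $\int_{t_k}^{t_{k+1}} a_{ij}(t,x)\,{\rm d}t \ge B_{ij}(x)$, and $B_{ij}(x)$ sits at the fixed level $\delta_0$ --- there is no homogeneity of $x\mapsto a_{ij}(t,x)$ that would make this floor grow with the spread. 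So the comparison you can actually make is: error of order $\omega\bigl(\mathrm{spread}\cdot\sup_k(t_{k+1}-t_k)\bigr)$ (with $\omega$ a modulus of continuity of $A$ in $x$, which incidentally Assumption A1 grants only for $A(t,x)x$, not for $A$ itself) against a coupling floor $\delta_0$. This wins only when the spread is already small, i.e., your argument establishes the trajectory bound only in a neighborhood of the consensus line; and since the spread is merely non-increasing, you cannot guarantee the state ever enters that neighborhood without already knowing consensus. The subdivision does not repair this: on a short subinterval the trajectory integrand is indeed close to the integrand frozen at that subinterval's starting point, but the hypothesis gives no lower bound on $\int_{s_m}^{s_{m+1}} a_{ij}(t,\phi(s_m))\,{\rm d}t$ over short subintervals --- the coupling mass promised by $B$ may be concentrated anywhere in $[t_k,t_{k+1}]$ --- so summing the piecewise comparisons only reproduces the trajectory integral you started from, which is circular.

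This is precisely the obstacle the paper's proof goes around rather than through. It never estimates the difference between the trajectory integrand and the integrand frozen at the initial point. Instead it uses invariance of $D$ (every point the trajectory visits lies in $D$, where the frozen bound holds at \emph{every} point) to compare, entrywise and via the Euler sums, the accumulated coupling along the trajectory with the frozen sum at a worst-case point $x_{rs}\in D$ selected by continuity and compactness --- one point per entry $(r,s)$ --- yielding $\int_{t_k}^{t_{k+1}} a_{rs}(t,\phi(t,t_k,x))\,{\rm d}t \gtrsim B_{rs}(x_{rs})$; continuity of $B$ and compactness of $D$ then produce a single matrix $S$ with $B_{rs}(x_{rs})\ge S_{rs}$ and $\mathcal{G}^S$ QSC, after which Theorem \ref{thm:main} applies. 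No displacement or perturbation estimate appears, which is what allows the argument to work at arbitrary spread; your route, by contrast, cannot close without an additional idea that decouples the error from the size of the initial disagreement.
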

\begin{proof}

		Assume $B(x)$ is continuous.
We utilize Euler approximation as before. Note that for fixed $r\ne s\in\{1,\cdots,n\}$,
there exists $x_{rs}\in D$ such that 
\[
\sum_{i=1}^{N-1}ha_{rs}(t_{k}+hi,\phi(t_{k}+hi,t_{k},x))\ge\sum_{i=1}^{N-1}ha_{rs}(t_{k}+hi,x_{rs})
\]
since for every $k$ and $i$, $x\mapsto a_{rs}(t_{k}+hi,\phi(t_{k}+hi,t_{k},x))$
is continuous. But the right hand side is an approximation of $\int_{t_{k}}^{t_{k+1}}a_{rs}(t,x_{rs}){\rm d}t$,
which is lower bounded by $B_{rs}(x_{rs})$. Define a matrix $\tilde{B}:=(B_{rs}(x_{rs}))$.
Due to the continuity of $x\mapsto B(x)$, $B(x)$ and hence $\tilde{B}$,
are lower bounded by some matrix $S$ such that $\mathcal{G}^{S}$
is QSC. The conclusion follows invoking Theorem \ref{thm:main}.
\end{proof}
\begin{example}
1) Theorem \ref{thm:moreau} is now a corollary of Theorem \ref{thm:main}.
Our result is slightly stronger: the mapping $t\mapsto A(t)$ is only
required to be bounded measurable while in Theorem \ref{thm:moreau},
this mapping is assumed to be piecewise continuous. Note that the proof
strategy is rather different for the two theorems.

2) For Theorem \ref{thm:lin}, the system \eqref{sys:lin}
can be written as
\[
\dot{x}=\left[\sum_{k=1}^{p}1_{k}(\sigma(t))A_{k}(x)\right]x.
\]
Fix an interval
$[t_{0},t_{0}+T]$, then on $[t_{0}-\tau_{D},t_{0}+T+\tau_{D}]$,
we integrate 
\[
\int_{t_{0}-\tau_{D}}^{t_{0}+T+\tau_{D}}\sum_{k=1}^{p}1_{k}(\sigma(t))A_{k}(x){\rm d}t\ge\tau_{D}\sum_{k=1}^{p}A_{k}(x).
\]
By assumption, $\sum_{k=1}^{p}A_{k}(x)$ is QSC and hence the system
achieves asymptotic consensus. 

If we assume further that $A_{k}(x)$ is continuous, then we get exponential
consensus from Theorem \ref{thm:main}. But as far 
as we know, it is not clear how to prove this using the techniques 
in \cite{Lin2007}.
In addition, Theorem \ref{thm:lin}
was proven only for switching multi-agent system with regular switching
signal while Theorem \ref{thm:main} only requires the ``switchings''
to be measurable which is always satisfied in practice.
\end{example}
\begin{example} \label{exmp:kuramoto}
Consider a Kuramoto model with identical frequency 
\begin{equation}
\dot{\theta}_{i}=\omega+\sum_{j\in\mathcal{N}_{t}(i)}a_{ij}(t)\sin(\theta_{j}-\theta_{i}),\quad i=1,\cdots n\label{eq:kuramoto}
\end{equation}
where $\mathcal{N}_{t}(i)$ stands for the neighboring node of $i$
at time $t$ and $a_{ij}(\cdot)\in L^{\infty}(\mathbb{R}_{\ge0},\mathbb{R}_{>0})$.
Let $a,b$ be real numbers such that $0\le b-a<\pi$ and 
the graph $ \mathcal{G}^{A(t)}$ satisfies 
the assumption of Theorem \ref{thm:main} where the lower
bound of the accumulated graph is now state-independent.

We claim that the system
\eqref{eq:kuramoto} achieves exponential consensus on $[a,b]^{n}$.
In particular, if $\dot{\theta}_{i}=\omega+\sum_{j\in\mathcal{N}(i)}\sin(\theta_{j}-\theta_{i})$
and the graph associated with the system is QSC, then exponential
consensus is achieved on $[a,b]^{n}$. To see the claim, let $x_{i}=\theta_{i}-\omega$,
the above model can be written as
\[
\dot{x}_{i}=\sum_{j\in\mathcal{N}_{t}(i)}a_{ij}(t)\sin(x_{j}-x_{i})
\]
or in matrix form $\dot{x}=A(t,x)x$ where $(A(t,x))_{ij}=\frac{\sin(x_{j}-x_{i})}{x_{j}-x_{i}}a_{ij}(t)$
for $i\ne j$ and $j\in\mathcal{N}_{t}(i)$. Then on $[a,b]^{n}$,
$(A(t,x))_{ij}\ge ca_{ij}(t)$ for some positive constant $c$. Thus
consensus is determined by the graph
$\mathcal{G}^{A(t)}$.

\end{example}
\section{Simulation Result} \label{sec:simu}
We simulate Example \ref{exmp:kuramoto} for
$\omega =0$. Consider a chain structure
as in Fig. \ref{fig:chain} of
$N=10$ oscillators. The weights on the link $(x_{i} \to x_{i+1})$ is 
$a_{i,i+1}(t)$. The weights are generated in the following manner.
First, generate some random intervals $[t_k, t_{k+1}]$ for $k=1,\cdots ,M$
for some large $M$. Then, divide each $[t_k, t_{k+1}]$
into $N$ smaller pieces $[s_i^k, s^k_{i+1}]$ randomly. After that,
Euler scheme will run on each interval $[s_i^k,s^k_{i+1}]$. In each
step of the Euler scheme, we choose three random 
numbers $p,q,r$ from $\{1,\cdots ,N-1\}$ and generate three
random positives numbers $a_{p,p+1}$, 
$a_{q,q+1}$, $a_{r,r+1}$ lower bounded by
a threshold $\delta>0$ and the rest of $a_{i,i+1}$ are set to zero.

By construction, the weights $a_{i,i+1}$ are zero for most of the time
and since the these weights are generated randomly, they are 
quite irregular. But still, we can see from
Fig. \ref{fig:kuramoto} that the system achieves consensus.

\begin{figure}[ht]
\centerline{
\includegraphics[scale=0.4]{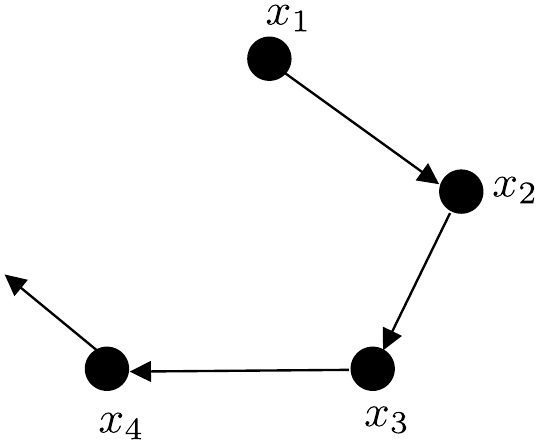}
}
\caption{A chain of coupled oscillators. \label{fig:chain}}
\end{figure}

\begin{figure}[ht]
\centerline{
\includegraphics[scale=0.35]{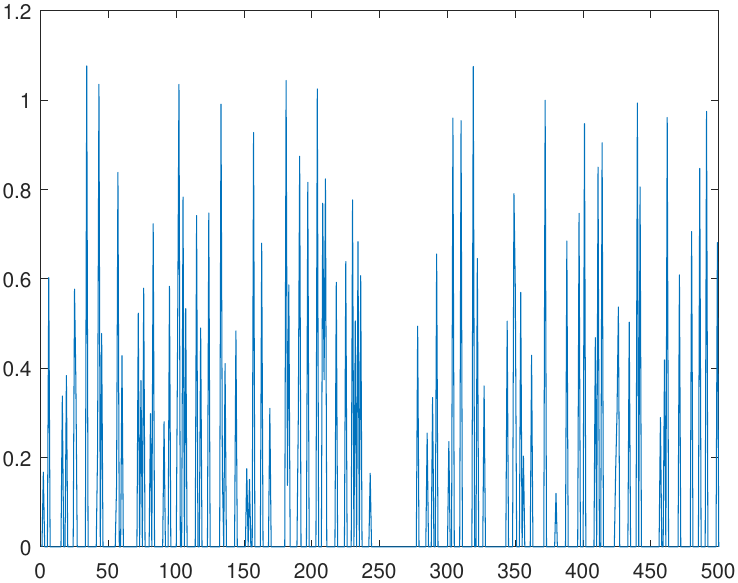}
\includegraphics[scale=0.35]{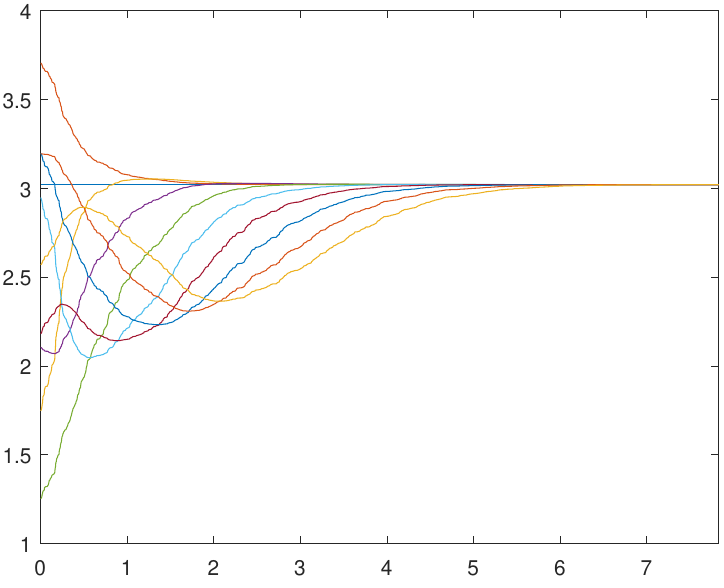}
}
\caption{Simulation result. Left: the signal $a_{1,1+1}(t)$. Right: the state components. \label{fig:kuramoto} }
\end{figure}

\section{Conclusion}

In this technical note, we have shown that the Hilbert metric can
serve as alternative tool to study consensus properties. It is advantageous
in dealing with nonlinearities and time dependencies, and requires
very weak regularity assumptions. The results obtained in this note
are somewhat preliminary and open the door for future research. 

\section{Appendix}
\begin{proof}[Proof of Lemma \ref{lem:A-main} ]
Consider the set
\[
\mathcal{E}(\epsilon)=\left\{ e:\frac{1}{\sqrt{n}}-\epsilon\le e_{i}\le\frac{1}{\sqrt{n}},\;\forall i=1,\cdots,n,\;|e|\le1\right\} .
\]
Then $\mathcal{K}(\epsilon)=\{\alpha e:\forall\alpha\in\mathbb{R}_{>0},\;e\in\mathcal{E}(\epsilon)\}$.
Thus it is sufficient to prove
\begin{equation}
\frac{(Ae)_{i}}{|Ae|}\ge\frac{1}{\sqrt{n}}-C\epsilon,\quad\forall i\ge1,\;e\in\mathcal{E}(\epsilon).\label{eq:lem-proof}
\end{equation}
Write $e=\frac{1}{\sqrt{n}}-\tilde{e}$, then $0\le\tilde{e}_{i}\le\epsilon$
and $(Ae)_{i}=\frac{1}{\sqrt{n}}-(A\tilde{e})_{i}$. On the one
hand, $(A\tilde{e})_{i}=\sum_{j\ne k}^{n}a_{ij}\tilde{e}_{j}+a_{ik}\tilde{e}_{k}\ge\delta\tilde{e}_{k}$.
On the other hand,
\begin{align*}
\sum_{j\ne k}^{n}a_{ij}\tilde{e}_{j}+a_{ik}\tilde{e}_{k} & \le\epsilon\sum_{j\ne k}^{n}a_{ij}+a_{ik}\tilde{e}_{k}\\
 & =\epsilon(1-a_{ik})+a_{ik}\tilde{e}_{k}\\
 & \le\delta(\tilde{e}_{k}-\epsilon)+\epsilon .
\end{align*}
Thus we obtain the inequality
\[
\frac{1 }{\sqrt{n}}-\epsilon(1 -\delta)-\delta\tilde{e}_{k}\le(Ae)_{i}\le\frac{1 }{\sqrt{n}}-\delta\tilde{e}_{k}.
\]
As a result $|Ae|\le1 -\sqrt{n}\delta\tilde{e}_{k}$ and
\[
\frac{(Ae)_{i}}{|Ae|}\ge\frac{\frac{1 }{\sqrt{n}}-\delta\tilde{e}_{k}-\epsilon(1 -\delta)}{1 -\sqrt{n}\delta\tilde{e}_{k}}=\frac{1}{\sqrt{n}}-\frac{1 -\delta}{1 -\sqrt{n}\tilde{e}_{k}\delta}\epsilon
\]
Recall that $\tilde{e}_{k}\le\epsilon<\frac{1}{\sqrt{n}}$, we get
$\frac{1 -\delta}{1 -\sqrt{n}\tilde{e}_{k}\delta}\le\frac{1 -\delta}{1 -\sqrt{n}\epsilon\delta}:=C\in(0,1)$
and \eqref{eq:lem-proof} follows. In other words, we have shown $A\mathcal{K}(\epsilon)\subseteq\mathcal{K}(C\epsilon)$.

To show \eqref{eq:lem:contrac}, following the same procedure, we
can prove
$
A\mathcal{K}(C\epsilon)\subseteq\mathcal{K}(C'C\epsilon)
$,
where $C'=\frac{1 -\delta}{1 -\sqrt{n}C\epsilon\delta}<C$.
Thus $A^{2}\mathcal{K}(\epsilon)\subseteq A\mathcal{K}(C\epsilon)\subseteq\mathcal{K}(C^{2}\epsilon)$.
By induction, we can show $A^{m}\mathcal{K}(\epsilon)\subseteq\mathcal{K}(C^{m}\epsilon)$.
Thus the inequality \eqref{eq:lem:contrac}.
\end{proof}
\begin{lemma}
\label{lem:An-Bn}For $x\ge0$, define two quantities as
\[
A_{n}(x)=\left|x-\frac{|x|}{\sqrt{n}}\mathds{1}\right|^{2},\quad B_{n}(x)=\sum_{i,j=1}^{n}(x_{i}-x_{j})^{2}
\]
then $nA_{n}\le B_{n}\le2nA_{n}$.
\end{lemma}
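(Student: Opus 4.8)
The plan is to reduce the whole statement to two scalar quantities and then read off each inequality as a one-line fact. Writing $x\ge 0$ componentwise, I would set $r=|x|=\sqrt{\sum_i x_i^2}$ and $s=\mathds{1}^{\top}x=\sum_i x_i$, observing that $s\ge 0$ precisely because $x\ge 0$. The first step is to expand both expressions in terms of $r$ and $s$. For $B_n$ I would use the identity $\sum_{i,j}(x_i-x_j)^2 = 2n\sum_i x_i^2 - 2\big(\sum_i x_i\big)^2$, which gives $B_n = 2nr^2 - 2s^2$. For $A_n$, expanding the square and using $|\mathds{1}|^2=n$ together with $\langle x,\mathds{1}\rangle = s$ and $\big|\tfrac{|x|}{\sqrt n}\mathds{1}\big|^2=r^2$ yields $A_n = 2r^2 - \tfrac{2rs}{\sqrt n}$.

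Once these two identities are in hand, each bound collapses. For the lower bound $nA_n\le B_n$, substitution gives $2nr^2 - 2\sqrt{n}\,rs \le 2nr^2 - 2s^2$, i.e.\ $s^2 \le \sqrt{n}\,rs$. Since $s\ge 0$, this is exactly the Cauchy--Schwarz inequality $s=\langle x,\mathds{1}\rangle \le |x|\,|\mathds{1}| = \sqrt{n}\,r$ multiplied through by $s$; the degenerate case $s=0$ forces $x=0$, where both sides vanish. For the upper bound $B_n\le 2nA_n$, substitution gives $2nr^2 - 2s^2 \le 4nr^2 - 4\sqrt{n}\,rs$, which rearranges to
\[
0 \le 2\big(nr^2 - 2\sqrt{n}\,rs + s^2\big) = 2\big(\sqrt{n}\,r - s\big)^2,
\]
a perfect square and hence automatically true.

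There is no genuine obstacle here; the only place requiring care is the bookkeeping in the two expansions (in particular the cross term $\tfrac{2rs}{\sqrt n}$ and the cancellation $\big|\tfrac{|x|}{\sqrt n}\mathds{1}\big|^2=r^2$), and the observation that the sign hypothesis $x\ge 0$ is exactly what lets the lower inequality $s^2\le\sqrt{n}\,rs$ be identified with Cauchy--Schwarz. The upper inequality, by contrast, needs no sign assumption, since it is nothing more than a completed square. Assembling the two halves then proves $nA_n\le B_n\le 2nA_n$ and completes the argument.
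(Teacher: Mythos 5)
Your proof is correct and follows essentially the same route as the paper: both expand $A_n$ and $B_n$ in terms of $|x|$ and $\sum_i x_i$, obtain the lower bound from $0\le\sum_i x_i\le\sqrt{n}\,|x|$ (nonnegativity plus Cauchy--Schwarz), and obtain the upper bound as the perfect square $2\bigl(\sqrt{n}\,|x|-\sum_i x_i\bigr)^2\ge0$. The only cosmetic difference is that the paper first normalizes to $|x|=1$, whereas you carry the two scalars $r$ and $s$ throughout.
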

\begin{proof}
It suffices to prove for $|x|=1$. Define $X=\sum_{i=1}^{n}x_{i}$,
then
\[
A_{n}=2-\frac{2}{\sqrt{n}}X,\quad B_{n}=2n-2X^{2}.
\]
Since $|x|=1$, we have $0\le X\le\sqrt{n}$. Thus $B_{n}\ge2n-2\sqrt{n}X=nA_{n}$.
On the other hand, $2nA_{n}-B_{n}=2(X-\sqrt{n})^{2}\ge0$.
\end{proof}
\begin{lemma}
\label{lem:2norm-Hnorm}Consider the cone $\mathbb{R}_{+}^{n}$. Suppose
that $\mathcal{K}$ is a proper cone such that $\mathcal{K}\subseteq\text{Int}\ \mathbb{R}_{+}^{n}\cup\{0\}$.
Then there exist some constants $C_{2}>C_{1}>0$ such that $C_{1}d(x,w)\le|x-w|\le C_{2}d(x,w)$
for all $x\in\mathbb{R}_{+}^{n},$ $w\in\mathcal{K}$ with $|w|=|x|=1$. 
\end{lemma}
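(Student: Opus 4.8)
The plan is to reduce everything to the explicit expression $d(x,w)=\ln\frac{\max_i(x_i/w_i)}{\min_i(x_i/w_i)}$ already recorded for cones inside $\mathrm{Int}\,\mathbb{R}_+^n\cup\{0\}$, and then to compare the two quantities through the coordinates $r_i:=x_i/w_i$. First I would extract the uniform bounds that the cone hypothesis supplies: since $\mathcal{K}$ is closed with $\mathcal{K}\setminus\{0\}\subseteq\mathrm{Int}\,\mathbb{R}_+^n$, the set $\mathcal{K}\cap\{|w|=1\}$ is compact and stays away from $\partial\mathbb{R}_+^n$, so there is $\delta\in(0,1)$ with $w_i\ge\delta$ for every admissible $w$ and every $i$; moreover $w_i\le1$ and $x_i\le1$ because $|x|=|w|=1$. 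A one-line computation from $\sum_i x_i^2=\sum_i w_i^2=1$ and $x_i=r_iw_i$ gives $\sum_i(r_i^2-1)w_i^2=0$, whence $\min_i r_i\le1\le\max_i r_i$.

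For the upper bound $|x-w|\le C_2\,d(x,w)$, write $d=\ln(r_{\max}/r_{\min})$. Combining this with $r_{\min}\le1\le r_{\max}$ yields $e^{-d}\le r_i\le e^{d}$, hence $|r_i-1|\le e^{d}-1$, and therefore
\begin{equation*}
|x-w|^2=\sum_i w_i^2(r_i-1)^2\le (e^{d}-1)^2\sum_i w_i^2=(e^{d}-1)^2 .
\end{equation*}
Thus $|x-w|\le e^{d}-1$, and since trivially $|x-w|\le|x|+|w|=2$, the convexity of $d\mapsto e^{d}-1$ (which lies below the chord joining $(0,0)$ and $(\ln 3,2)$ on $[0,\ln 3]$) gives $|x-w|\le\min(e^{d}-1,2)\le\tfrac{2}{\ln 3}\,d$ for every $d\in[0,\infty]$. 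This direction holds for all $x\in\mathbb{R}_+^n$.

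For the lower bound $C_1\,d(x,w)\le|x-w|$ I would exploit that $r_{\min}$ is bounded below. From $|r_i-1|=|x_i-w_i|/w_i\le|x-w|/\delta$ one gets $r_{\max}-r_{\min}\le 2|x-w|/\delta$, and using $\ln(r_{\max}/r_{\min})\le(r_{\max}-r_{\min})/r_{\min}$ one obtains $d\le\frac{2}{\delta\, r_{\min}}\,|x-w|$. Hence $C_1\,d\le|x-w|$ holds with $C_1=\tfrac{\delta\,\delta'}{2}$ as soon as $r_{\min}\ge\delta'$ for some $\delta'>0$, and indeed $C_2=\tfrac{2}{\ln3}>C_1$.

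The main obstacle is precisely this uniform lower bound on $r_{\min}$. Because the reference is the \emph{full} orthant, if $x$ is allowed to touch $\partial\mathbb{R}_+^n$ then some $x_i=0$ forces $d(x,w)=+\infty$ while $|x-w|\le2$, so the lower inequality cannot hold literally for every $x\in\mathbb{R}_+^n$. The honest conclusion is that the equivalence requires $x$ (like $w$) to remain in a compact subcone of $\mathrm{Int}\,\mathbb{R}_+^n\cup\{0\}$ — which is exactly the setting in which the lemma is applied, since the relevant trajectories stay in such a cone where $x_i\ge\delta'$ uniformly. I would therefore either impose this interior restriction on $x$ or state the lower bound only where $d(x,w)$ is finite; the remaining estimates are then entirely elementary.
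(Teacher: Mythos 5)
Your proof is correct, and it takes a genuinely different route from the paper. The paper's proof is a two-line appeal to Bushell's inequalities \cite[Theorem 4.1, (4.1) and (4.3)]{Bushell1973}, namely $C\tanh\bigl(\tfrac{1}{2}d(x,w)\bigr)\le|x-w|\le e^{d(x,w)}-1$ for normalized points, after which both linear bounds are deduced from the assertion that $d(x,w)$ is uniformly bounded over the admissible pairs. Your argument is self-contained: working in the ratio coordinates $r_i=x_i/w_i$, you derive $|x-w|\le e^{d}-1$ directly from $r_{\min}\le 1\le r_{\max}$, then the chord estimate combined with the trivial bound $|x-w|\le 2$ gives $|x-w|\le\tfrac{2}{\ln 3}\,d(x,w)$ for \emph{every} $x\in\mathbb{R}_{+}^{n}$ with $|x|=1$, including the case $d=\infty$; the lower bound follows from $\ln(1+u)\le u$ with explicit constants. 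What your approach buys is precisely this unconditional upper bound and explicit constants; what the paper's approach buys is brevity, at the price of the boundedness claim discussed next.

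More importantly, you have caught a genuine defect in the statement, which the paper's own proof glosses over. The paper concludes ``by noticing that $d(x,w)$ is bounded for all $x\in\mathbb{R}_{+}^{n}$, $w\in\mathcal{K}$ with $|w|=|x|=1$,'' but this is false: if some coordinate $x_i=0$ (or merely $x_i\to 0$), then $m(x/w)=0$ and $d(x,w)=+\infty$ while $|x-w|\le 2$, so no uniform $C_1>0$ can satisfy $C_1 d(x,w)\le|x-w|$ when $x$ ranges over all of $\mathbb{R}_{+}^{n}$; both of Bushell's bounds need $d$ finite and uniformly bounded to be converted into linear ones. Your diagnosis and fix are the right ones: the lower inequality requires $x$ to stay in a compact subcone of ${\rm Int}\,\mathbb{R}_{+}^{n}\cup\{0\}$, so that $r_{\min}$ is bounded below. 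This restricted version is exactly what the paper uses: in the proof of Lemma \ref{lem:cons-Hmetric} the lower bound is invoked only for states lying in the invariant cone $\mathcal{K}$, while the direction needed for general points is the upper bound, which you establish without any restriction. So your proposal is not only correct but also repairs the statement of the lemma.
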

\begin{proof} Since $|w|=|x|=1$, we can find a constant $C>0$,
such that 
\[
C\tanh\left(\frac{1}{2}d(x,w)\right)\le|x-w|\le\exp\left(d(x,w)\right)-1
\]
invoking \cite[Theorem 4.1]{Bushell1973}, see \cite[(4.1) and (4.3)]{Bushell1973}.
The conclusion follows by noticing that $d(x,w)$ is bounded for all
$x\in\mathbb{R}_{+}^{n}$, $w\in\mathcal{K}$ with $|w|=|x|=1$.
\end{proof}

\bibliographystyle{IEEEtran}
\bibliography{mono}

\end{document}